\newcommand{\tr}{\mathsf{T}}
\newtheorem{prop}{Proposition}
\newtheorem{prob}{Step}
\newtheorem{remark}{Remark}
\newtheorem{col}{Corollary}
\title{\LARGE \bf
	Control-aware Learning of Koopman Embedding Models \\
}
\author{Daisuke Uchida$^{1}$ and Karthik Duraisamy$^{1}$
	\thanks{$^{1}$The authors are with the department of Aerospace Engineering, University of Michigan, Ann Arbor, Michigan, 48109, USA
		(e-mail: duchida@umich.edu, kdur@umich.edu).
	}
}
\begin{document}

	\maketitle
	\thispagestyle{empty}
	\pagestyle{empty}

	\begin{abstract}
		A learning method is proposed for Koopman operator-based models with the goal of improving closed-loop control behavior. A neural network-based approach is used to discover a space of observables in which nonlinear dynamics is linearly embedded. While accurate state predictions can be expected with the use of such complex state-to-observable maps, undesirable side-effects may be introduced when the model is deployed in a closed-loop environment. This is because of modeling or residual error in the linear embedding process, which can manifest itself in a different manner compared to the state prediction. 
		To this end, a technique is proposed to refine the originally trained model with the goal of improving the closed-loop behavior of the model while retaining the  state-prediction accuracy obtained in the initial learning.  Finally, a simple data sampling strategy is proposed to use inputs deterministically sampled from continuous functions, leading to additional improvements in the controller performance for nonlinear dynamical systems. Several numerical examples are provided to show the efficacy of the proposed method.
	\end{abstract}
	\vspace{-0mm}
	
	\section{INTRODUCTION}
	\vspace{-0mm}
	The Koopman operator describes the evolution of a nonlinear dynamical system in terms of a possibly infinite-dimensional but linear operator in a lifted space of observables.
	Starting from applications in dimension reduction of high-dimensional nonlinear systems such as turbulent flows\cite{Spectral_analysis_of_nonlinear_flows,P_shmidt_DMD_2008}, the Koopman operator has gained high popularity in recent years as a data-driven modeling approach for dynamical systems.
	Among a number of applications is that of data-driven control.
	On the basis of finite-dimensional approximation of the Koopman operator derived from Extended Dynamic Mode Decomposition (EDMD)\cite{Williams2015}, which yields a Linear Time-Invariant (LTI) data-driven model, several linear controllers have been applied such as Linear Quadratic Regulator (LQR)\cite{derivative_based_Murphy,local_Koopman,soft_robot_arm} and Model Predictive Control (MPC)\cite{KORDA_Koopman_MPC,Koopman_Lyapunov_based_MPC,Koopman_generators_Peitz,Soft_robot,tube_based_MPC}.
	Also, to overcome a limitation of the Koopman formalism that the modeling error is practically inevitable due to the nature of purely data-driven modeling procedures, several concepts from control theories, e.g., robust tube-based MPC\cite{tube_based_MPC}, offset-free MPC\cite{handling_plant_model_mismatch}, conformant synthesis\cite{conformant_systhesis}, and so on, have been utilized to achieve the control objectives in the fully data-driven setting. 
	While there are many frameworks and methods in literature to incorporate the Koopman operator into data-driven control, not much attention has been paid to the modeling aspect of the Koopman operator-based framework itself, i.e.,
	how the modeling error behaves while implementing state-prediction or feedback control, 
	which pertains to practically unavoidable discrepancy between theories and actual implementations and should be considered as an essential factor for reliable modeling procedures and control applications.

	It has been recognized that the convergence property of the EDMD algorithm\cite{on_convergence_of_EDMD} does not hold for non-autonomous types of Koopman models\cite{KORDA_Koopman_MPC}. 
	This
	motivates the use of neural networks to learn the observable functions themselves along with the finite-dimensional approximation of the Koopman operator.
		Especially, while the model structure we employ in this paper is restrictive so that we can realize LTI systems in the embedded space, which allows the use of linear controller designs even if the dynamics is nonlinear,
		it is shown that obtaining high state-predictive accuracy is achievable if the dynamics is linear with respect to input and one has access to enough data and computational resources that afford high-dimensional and complex feature maps. 

	On the other hand, we show that the modeling error of the Koopman models interacts with the closed-loop system in a different way from the state-prediction and we exemplify that the controller performance can greatly suffer from the modeling error, on which the complexity and dimension of observables have a large influence.
	To improve the possibly undesirable closed-loop behavior induced by Koopman-based control models, a control-aware method is proposed, in which the model is refined after the initial training with additional use of data points sampled from closed-loop dynamics. 
	This modification of the model aims to directly reduce the impact of the modeling error on the controller performance.
	Moreover, with the same intent as the control-aware learning method, we also present a simple yet effective data sampling strategy that only uses inputs deterministically sampled from continuous functions.

	This paper is organized as follows. In Section \ref{section 2}, the Koopman operator framework for non-autonomous systems is presented. 
	In Section \ref{section 3}, we discuss the manifestation of the modeling error on both prediction and control and propose a control-aware learning method along with a data sampling strategy about inputs to improve the actual closed-loop behavior.
	Finally, several dynamical systems are tested to show the effectiveness of the proposed method in Section \ref{section. numerical examples}.

	\vspace{-0mm}
	\section{KOOPMAN OPERATOR THEORY FOR NON-AUTONOMOUS SYSTEMS}
	\vspace{-0mm}
	\label{section 2}
		\subsection{Koopman Operator for General Non-Autonomous Systems}
		In this section, the Koopman operator is formally introduced for general non-autonomous systems, which forms the basis of observations made in Section \ref{section 3} and the proposed method.
	Consider a dynamical system:
	\vspace{-0mm}
	\begin{align}
		\dot{x}(t)=f(x(t),u(t)),
		\label{eq. governing eq in ODE}
	\end{align}
	where $x(t)\in \mathcal{X}\subseteq \mathbb{R}^n$, $u(t)\in \mathcal{U}\subseteq \mathbb{R}^p$, and $f:\mathcal{X}\times \mathcal{U}\rightarrow \mathbb{R}^n$ are the state, the input, and the possibly nonlinear mapping describing dynamics of the system, respectively.
	Throughout the paper, we assume the solution $x(t)$ to \eqref{eq. governing eq in ODE} to be continuous with respect to $t$.
	With a first-order time discretization, \eqref{eq. governing eq in ODE} yields the following difference equation:
	\vspace{-0mm}
	\begin{align}
		x_{k+1}=F(x_k, u_k),
		\label{eq. governing eq}
	\end{align}
	where $x_k:=x(k\Delta t)$, $u_k:=u(k\Delta t)$, and $\Delta t$ denotes the sampling period.
	On the assumption $\Delta t\ll 1$, we consider \eqref{eq. governing eq} as the discrete-time system whose dynamics is equivalent to that of \eqref{eq. governing eq in ODE}.
	It is assumed that $f$ (i.e., $F$) is unknown and its dynamics is modeled in a data-driven manner.

	In the Koopman operator formalism, the dynamics is characterized through functions called observables, which are mappings from the state-space into $\mathbb{R}$.
	While the Koopman operator was first introduced in the context of autonomous systems, there have been also several efforts extending it to non-autonomous systems with control 
	inputs\cite{DMDc,EDMDc,KORDA_Koopman_MPC}. 
	In a formal extension\cite{KORDA_Koopman_MPC},
	the state-space is extended to the augmented space $\mathcal{X}\times l (\mathcal{U})$, 
	where 
	\vspace{-0mm}
		\begin{align}
			\label{eq. def of space of sequenses of inputs}
			l (\mathcal{U}):=\{ \bm{U}:=(u_1,u_{2},\cdots) \mid u_i\in \mathcal{U},\forall i \in \mathbb{N} \},
		\end{align}
	is the space of sequences of inputs,
	and the observables $g$ are of the form:
	\vspace{-0mm}
	\begin{align}
		g:\mathcal{X}\times l (\mathcal{U})
		\rightarrow 
		\mathbb{R}:
		(x_k,\bm{U})
		\mapsto
		g(x_k,\bm{U}).
		\label{eq. obs general description}
	\end{align}
	In practice, the observables $g$ may be considered as the feature maps that are either specified by users or learned from data in the modeling procedure.

	The Koopman operator corresponding to the non-autonomous system \eqref{eq. governing eq} is defined as an infinite-dimensional linear operator $\mathcal{K}:\mathcal{F}\rightarrow \mathcal{F}$ ($\mathcal{F}$: space of functions $g$) s.t.
	\vspace{-0mm}
	\begin{align}
		&\mathcal{K}g = g\circ \hat{F}
		\ \Leftrightarrow\ 
		(\mathcal{K}g)(x_k, \bm{U})=
		g(\hat{F}(x_k, \bm{U})),
		&
		\label{eq. def of Koopman operator}
	\end{align}
	where 
	the mapping $\hat{F}:\mathcal{X}\times l (\mathcal{U})\rightarrow \mathcal{X}\times l (\mathcal{U})$ is defined by
	\vspace{-0mm}
	\begin{align}
		\label{eq. def of augmented dynamics}
		\hat{F}(x_k, \bm{U})
		:=
		\left(F(x_k,u_k), \mathcal{S}\bm{U}\right)
		=
		\left(
		x_{k+1}, \mathcal{S}\bm{U}
		\right),
	\end{align}
		with the notation
		$\bm{U}=(u_k,u_{k+1},\cdots)$,
	and $\mathcal{S}$ denotes the shift operator s.t.
	\vspace{-0mm}
	\begin{align}
		\label{eq. def of shift operator}
		\mathcal{S}\bm{U}=
		\mathcal{S}(u_k,u_{k+1},\cdots):=
		(u_{k+1},u_{k+2},\cdots).
	\end{align}

	It is easily inferred from \eqref{eq. def of Koopman operator} that $\mathcal{K}$ is a linear operator.
Note that since \eqref{eq. governing eq} only specifies the evolution of $x_k$, it is required to introduce the sequence $\bm{U}=(u_k,u_{k+1},\cdots)$ of inputs, which can be also interpreted as an input signal (i.e., a function) $U:\mathbb{Z}_{\geq 0}\rightarrow \mathcal{U}$,
	to formally define the Koopman operator $\mathcal{K}$.
	The equation \eqref{eq. def of Koopman operator} along with the definition \eqref{eq. def of augmented dynamics} can be viewed as the evolution of the dynamics \eqref{eq. governing eq} through the observables $g$.
	\vspace{-0mm}
	\subsection{Finite Dimensional Approximation of the Koopman Operator and Data-Driven Koopman Models}
	\vspace{-0mm}
	To apply the Koopman operator formalism to dynamical systems modeling, a finite-dimensional approximation $K$ of the Koopman operator $\mathcal{K}$ is introduced as follows.
	\vspace{3mm}
	\begin{prop}
		\label{prop. invariant subspace and existence of finite dimensional K}
		\rm{}
		Given observables $g_i\in \mathcal{F}$ ($i=1,\cdots,D$),
		let $g$ be an arbitrary element of $\text{span}(g_1\cdots, g_D)$. 
		Then, $\mathcal{K}g\in \text{span}(g_1\cdots, g_D)$, i.e., $\text{span}(g_1\cdots, g_D)$ is an invariant subspace under the action of the Koopman operator $\mathcal{K}:\mathcal{F}\rightarrow \mathcal{F}$, if and only if there exits $K\in \mathbb{R}^{D\times D}$ s.t.
		\begin{align}
			[\mathcal{K}g_1\ \cdots \ \mathcal{K}g_D]^\tr
			= K[g_1\ \cdots \ g_D]^\tr.
			\label{eq. finite dimensional approx of K}
		\end{align}
	\end{prop}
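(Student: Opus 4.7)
The plan is to prove the two directions of the biconditional separately, using the definition of span and the linearity of $\mathcal{K}$ (noted just before the proposition) as the only essential tools.

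For the forward direction, I would assume that $\text{span}(g_1,\ldots,g_D)$ is $\mathcal{K}$-invariant. By definition, each individual $g_i$ lies in this span, so invariance forces $\mathcal{K}g_i \in \text{span}(g_1,\ldots,g_D)$ for every $i$. Thus, by the definition of span, there exist scalars $K_{ij}\in\mathbb{R}$ such that $\mathcal{K}g_i=\sum_{j=1}^{D}K_{ij}g_j$ for each $i\in\{1,\ldots,D\}$. Assembling these coefficients into the matrix $K=(K_{ij})\in\mathbb{R}^{D\times D}$ and stacking the equations row-wise yields exactly \eqref{eq. finite dimensional approx of K}.

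For the reverse direction, I would assume the matrix identity \eqref{eq. finite dimensional approx of K} holds, so that $\mathcal{K}g_i=\sum_j K_{ij}g_j$ for each $i$. Taking an arbitrary $g\in\text{span}(g_1,\ldots,g_D)$, write $g=\sum_{i=1}^{D}c_i g_i$ for some $c_i\in\mathbb{R}$. Applying $\mathcal{K}$ and invoking its linearity (which follows directly from the composition definition in \eqref{eq. def of Koopman operator}), I would obtain $\mathcal{K}g=\sum_i c_i\mathcal{K}g_i=\sum_j\bigl(\sum_i c_i K_{ij}\bigr)g_j$, which is manifestly an element of $\text{span}(g_1,\ldots,g_D)$. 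This establishes invariance.

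There is not really a hard step here; the proposition is essentially a restatement of what it means for a finite collection to span an invariant subspace of a linear operator. The only point worth a brief remark is that $K$ need not be unique if the $g_i$'s are linearly dependent, but existence, which is all that is claimed, is unaffected. I would keep the proof short — roughly a few lines per direction — since its role is mainly to justify the finite-dimensional matrix representation used throughout the rest of the paper.
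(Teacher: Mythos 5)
Your proof is correct and follows essentially the same route as the paper's: specialize invariance to each basis element $g_i$ to extract the rows of $K$, and use linearity of $\mathcal{K}$ to pass from the matrix identity back to invariance of the whole span. Your added remark about non-uniqueness of $K$ when the $g_i$ are linearly dependent is accurate but not needed for the claim.
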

	\begin{proof}
		See Appendix \ref{appendix proof of prop 1}.
	\end{proof}
	\vspace{3mm}

	From an engineering perspective, it is of great interest to introduce the observables such that they allow practical models for control applications. One major choice of $g_i$ for the Koopman control problem takes the following structure of observables\cite{Model_based_control,Soft_robot,tube_based_MPC}:
	\vspace{-0mm}
	\begin{align}
		[g_1(x_k,\bm{U})\cdots g_{D}(x_k,\bm{U})]^\tr
		=
		\left[
		x_k^\tr\ \tilde{g}(x_k)^\tr\ u_k^\tr
		\right]^\tr,
		\label{eq. def of obs}
	\end{align}
	where $D=n+N+p$ and $\tilde{g}(x_k)\in \mathbb{R}^N$ represents a vector-valued function from $\mathcal{X}$ into $\mathbb{R}^N$ for some $N\in \mathbb{N}$.
	Note that only the first element $u_k$ in the sequence $\bm{U}=(u_k,u_{k+1},\cdots)$ appears in the definition \eqref{eq. def of obs}, which leads to a practical form of data-driven models consistent with many linear controller designs such as LQR and MPC.
	On the assumption that we have access to $x_k$ and $u_k$ as data, we consider the following finite-dimensional approximation $K_c\in\mathbb{R}^{(n+N+p)\times (n+N+p)}$ of the Koopman operator $\mathcal{K}$:
	\vspace{-0mm}
	\begin{align}
		\left[
		\begin{array}{c}
			x_{k+1}
			\\	
			\tilde{g}(x_{k+1})
			\\
			u_{k+1}
		\end{array}
		\right]
		\approx
		\underset{=:K_c}{
			\underbrace{
				\left[
				\begin{array}{c}
					\begin{array}{cc}
						A & B
					\end{array}
					\\
					*
				\end{array}
				\right]
		}}
		\left[
		\begin{array}{c}
			x_{k}
			\\	
			\tilde{g}(x_{k})
			\\
			u_{k}
		\end{array}
		\right],
		\label{eq. intro to Koopman model}
	\end{align}
	where matrices $A\in\mathbb{R}^{(n+N)\times (n+N)}$ and $B\in\mathbb{R}^{(n+N)\times p}$ are to be learned along with the feature maps $\tilde{g}$.
	Note that \eqref{eq. intro to Koopman model} is approximate since $\text{span}(g_1,\cdots,g_D)$ defined by \eqref{eq. def of obs} may not be invariant under the action of $\mathcal{K}$.

	Noticing that the first $n+N$ rows of \eqref{eq. intro to Koopman model} are enough to specify the evolution of the state $x_k$ s.t.
	\vspace{-0mm}
	\begin{align}
		\left[
		\begin{array}{c}
			x_{k+1}
			\\	
			\tilde{g}(x_{k+1})
		\end{array}
		\right]
		\approx
		A
		\left[
		\begin{array}{c}
			x_{k}
			\\	
			\tilde{g}(x_{k})
		\end{array}
		\right]
		+
		Bu_k,
		\label{eq. Koopman model 1}
	\end{align}
	we are only interested in learning \eqref{eq. Koopman model 1} and the last $p$ rows of $K_c$ in \eqref{eq. intro to Koopman model} are ignored in the proceeding formulations. 
		The discarded equations approximately represent $u_{k+1}$ given $x_k$ and $u_k$, which corresponds to the fact that the Koopman operator shifts the sequence of inputs according to \eqref{eq. def of shift operator}.
	From \eqref{eq. Koopman model 1}, the modeling error is defined as:
	\vspace{-0mm}
	\begin{align}
		r(x,u):=
		\left[
		\begin{array}{c}
			F(x,u)
			\\	
			\tilde{g}(F(x,u))
		\end{array}
		\right]
		-
		\left(
		A
		\left[
		\begin{array}{c}
			x
			\\	
			\tilde{g}(x)
		\end{array}
		\right]
		+
		Bu
		\right),
		\label{eq. def of modeling error}
	\end{align}
	and its norm:
	\vspace{-0mm}
	\begin{align}
		\| r \|_{L_2}=
		\sqrt{
			\int_{\mathcal{X}\times \mathcal{U}} 
			\| r(x,u) \|_2^2 dxdu 
		},
		\label{eq. norm of r}
	\end{align}
	may be used as a characteristic to evaluate the model, e.g., \eqref{eq. Koopman model 1} is exact almost everywhere if \eqref{eq. norm of r} is well-defined and $\|r\|_{L_2}=0$.

	The model \eqref{eq. Koopman model 1} is an LTI system in the new coordinates $[x_k^\tr\ \tilde{g}(x_k)^\tr]^\tr$ and linear controller designs can be applied to control \eqref{eq. governing eq}.
	In this paper, the following feedback controller with a static gain $\bm{K}\in \mathbb{R}^{p\times (n+N)}$ is considered:
	\vspace{-0mm}
	\begin{align}
		u_k=\bm{K}
		[x_k^\tr\ \tilde{g}(x_k)^\tr]^\tr.
		\label{eq. controller}
	\end{align}

	\section{Control-Consistent Learning of Koopman Embedding}
	\vspace{-0mm}
	\label{section 3}
	\subsection{Motivating Example}
	\label{section. motivating example}
	We consider the following one-dimensional system as a guiding example to motivate the proposed control-aware learning.
	\vspace{-0mm}
	\begin{align}
		x_{k+1}={x^2_k} e^{-x_k}+u_k,
		\ \ 
		x_k,u_k\in \mathbb{R}.
	\end{align}

	Suppose that we create the Model 1 defined as:
	\vspace{-0mm}
	\begin{align}
		&
		\left[
		\begin{array}{c}
			x_{k+1}
			\\
			\hspace{-2mm}{x^2_{k+1}}e^{-x_{k\hspace{-0.5mm}+\hspace{-0.5mm}1}}\hspace{-2.5mm}
		\end{array}
		\right]
		\hspace{-1.5mm}\approx \hspace{-1mm}
		A\hspace{-1mm}
		\left[
		\begin{array}{c}
			x_{k}
			\\
			\hspace{-2mm}{x^2_k}e^{-x_k}\hspace{-2.5mm}
		\end{array}
		\right]
		\hspace{-1.5mm}+\hspace{-1mm}
		Bu_k,\hspace{-0.5mm}
		\left(
		\tilde{g}(x_k)\hspace{-1mm}=\hspace{-1mm}
		{x^2_k}e^{-x_{k}}
		\right)\hspace{-1mm}.
		&
		\label{eq. ex1 model 1}
	\end{align}

	From Proposition \ref{prop. state prediction} in Section \ref{section. second training}, perfect state prediction with no error is possible with $A$ and $B$ given as the following forms:
	\vspace{-0mm}
	\begin{align}
		A=
		\left[
		\begin{array}{cc}
			0 & 1
			\\
			\alpha_1 & \alpha_2
		\end{array}
		\right],
		B=
		\left[
		\begin{array}{c}
			1
			\\
			\alpha_3
		\end{array}
		\right],
		\alpha_i\in \mathbb{R}.
	\end{align}

	The modeling error \eqref{eq. def of modeling error} is then represented by
	\vspace{-0mm}
	\begin{align}
		r(x_k,u_k)\hspace{-1mm}=\hspace{-1mm}
		\left[
		\begin{array}{c}
			0
			\\
			\begin{array}{l}
				\hspace{-4mm}
				({x^2_k}e^{-x_k} + u_k)^2
				\exp(-{x^2_k}e^{-x_{k}} \hspace{-1mm}-\hspace{-1mm} u_k)
				\\
				\hspace{14mm}-\hspace{-0.1mm} \alpha_1 x_k \hspace{-1mm}-\hspace{-1mm} \alpha_2 {x^2_k}e^{-x_{k}} \hspace{-1mm}-\hspace{-1mm} \alpha_3 u_k
				\hspace{-4mm}
			\end{array}
		\end{array}
		\right]\hspace{-1mm}.
		\label{eq. r1}
	\end{align}

	Suppose we also have the Model 2, which has richer features:
	\vspace{-0mm}
	\begin{align}
		&
		\left[
		\begin{array}{c}
			x_{k+1}
			\\
			\hspace{-2.5mm}{x^2_{k+1}}e^{-x_{k\hspace{-0.5mm}+\hspace{-0.5mm}1}}\hspace{-2.8mm}
			\\
			x_{k+1}^2
		\end{array}
		\right]
		\hspace{-1.5mm}\approx\hspace{-1mm} 
		A\hspace{-1.5mm}
		\left[
		\begin{array}{c}
			x_{k}
			\\
			\hspace{-2.5mm}{x^2_k}e^{-x_{k}}\hspace{-2.8mm}
			\\
			x_{k}^2
		\end{array}
		\right]
		\hspace{-2mm}+\hspace{-1mm}
		Bu_k,	\hspace{-1mm}
		\left(
		\hspace{-1mm}
		\tilde{g}(x_k)
		\hspace{-1mm}=\hspace{-1.5mm}
		\left[
		\begin{array}{c}
			\hspace{-2.3mm}	{x^2_k}e^{\hspace{-0.5mm}-\hspace{-0.2mm}x_{k}}\hspace{-2.5mm}
			\\
			x_k^2
		\end{array}
		\right]
		\hspace{-0.5mm}
		\right)\hspace{-1.2mm}.
		&
		\label{eq. ex1 model 2}
	\end{align}
	Perfect state prediction can be also achieved with
	\vspace{-0mm}
	\begin{align}
		A=
		\left[
		\begin{array}{ccc}
			0 & 1 & 0
			\\
			\beta_1 & \beta_2 & \beta_3
			\\
			\beta_4 & \beta_5 & \beta_6
		\end{array}
		\right],
		B=
		\left[
		\begin{array}{c}
			1
			\\
			\beta_7
			\\
			\beta_8 
		\end{array}
		\right],
		\beta_i\in \mathbb{R}.
	\end{align}

	In this case, however, the modeling error $r(x_k,u_k)$ takes a different form:
	\vspace{-0mm}
	\begin{align}
		r(x_k,\hspace{-0.7mm}u_k)\hspace{-1mm}=\hspace{-1.5mm}
		\left[
		\begin{array}{c}
			0
			\\
			\begin{array}{l}
				\hspace{-4mm}
				({x^2_k}e^{-x_k} + u_k)^2
				\exp(-{x^2_k}e^{-x_{k}}-u_k)  
				\\
				\hspace{7mm} 
				- \beta_1 x_k - \beta_2 {x^2_k}e^{-x_{k}} - \beta_3 x_k^2 - \beta_7 u_k\hspace{-4mm}
			\end{array}
			\\
			\begin{array}{l}
				\hspace{-4mm}({x^2_k}e^{-x_{k}}+u_k)^3
				- \beta_5 x_k 
				\\
				\hspace{18mm} - \beta_6 {x^2_k}e^{-x_{k}} - \beta_7 x_k^2 - \beta_8 u_k\hspace{-4mm}
			\end{array}
		\end{array}
		\right]\hspace{-1mm}.
		\label{eq. r2}
	\end{align}

	Figures \ref{subfig. ex1 heat map r1} and \ref{subfig. ex1 heat map r2} show the heat maps of $\|r(x_k,u_k)\|_2$, where the model parameters $\alpha_i$ and $\beta_i$ are obtained by EDMD\cite{KORDA_Koopman_MPC}.
	Model 2 in \eqref{eq. ex1 model 2} is highly erroneous for $x_k<0$ compared to the Model 1 in \eqref{eq. ex1 model 1}
	and the modeling error 
	accumulates according to \eqref{eq. error accumulation of control model} in Section \ref{section. second training} leading to undesirable closed-loop behaviors.
	Figure \ref{fig. control performance ex1} shows the controller performance of both models, where $\bm{K}$ in \eqref{eq. controller} is computed as an LQR gain with the cost function $\sum_{k=0}^{\infty}{x^2_k}+{u^2_k}$.
	Despite the fact that both models achieve precisely zero state-prediction error, the controller designed for Model 2 causes an undesirable oscillation even as the open-loop dynamics quickly converges to the origin after $k=1$.
	Herein, it is also emphasized that from the state-prediction point of view, complex and large models such as the Model 2 may be preferable in general since it is more likely to achieve better state-prediction by Corollary \ref{col. state prediction} in Section \ref{section. second training}.
	To deal with this degradation of the controller performance, which is not revealed from the state-prediction accuracy, a control-aware learning approach is proposed along with a simple data sampling strategy.
	\vspace{-0mm}
	\begin{figure}
		\centering
		\begin{subfigure}{0.45\linewidth}
			\centering
			\includegraphics[width=0.95\linewidth]{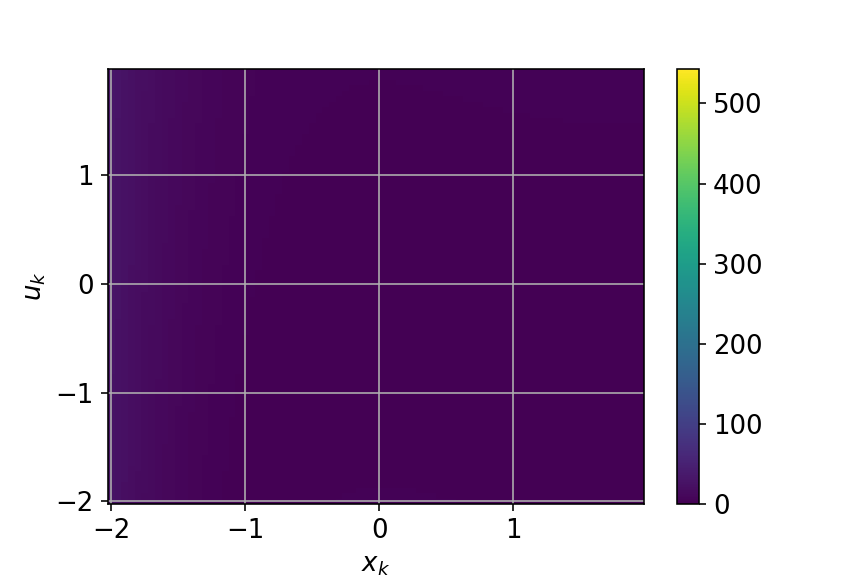}
			\caption{Model 1 \eqref{eq. r1}.}
			\label{subfig. ex1 heat map r1}
		\end{subfigure}
		\begin{subfigure}{0.45\linewidth}
			\centering
			\includegraphics[width=0.95\linewidth]{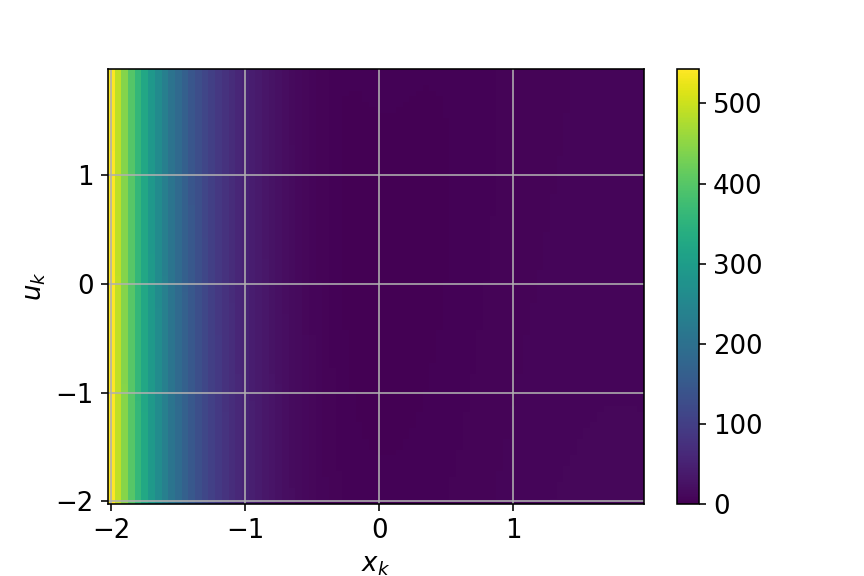}
			\caption{Model 2 \eqref{eq. r2}.}
			\label{subfig. ex1 heat map r2}
		\end{subfigure}
		\begin{subfigure}{0.45\linewidth}
			\centering
			\includegraphics[width=0.95\linewidth]{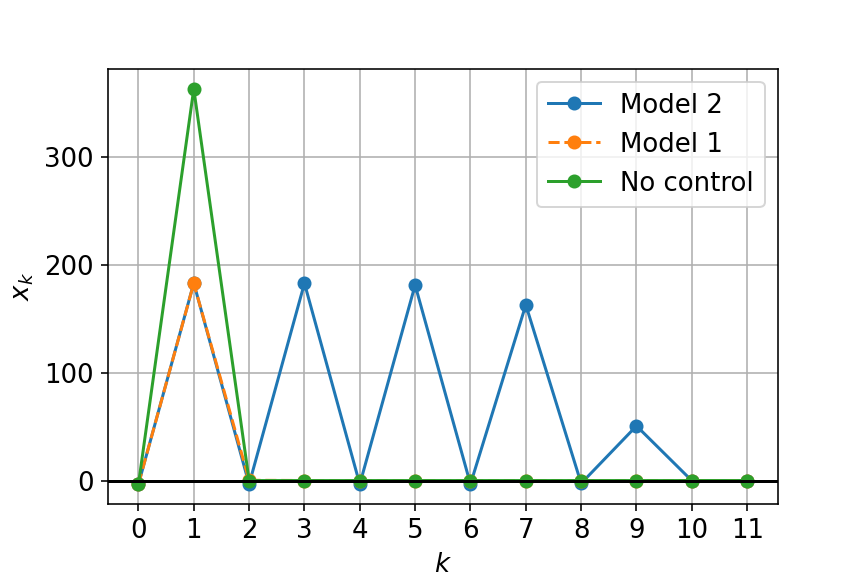}
			\caption{Control simulations.}
			\label{fig. control performance ex1}
		\end{subfigure}
		\caption{\em (a), (b): Heat maps of $\|r(x_k,u_k)\|_2$. (c): Controller performance with $x_0=-3.4298$.}
		\vspace{-0mm}
	\end{figure}
	
	\vspace{-0mm}
	\subsection{Modeling Errors of Koopman Control Models}
	\vspace{-0mm}
	Several methods have been proposed to learn the model \eqref{eq. Koopman model 1}, among of which the most straightforward one is to first specify the feature maps $\tilde{g}$ and infer $A$ and $B$ from data.
	This is reduced to a linear regression problem and a unique solution can be obtained in the same manner as the Dynamic Mode Decomposition (DMD)\cite{P_shmidt_DMD_2008,Spectral_analysis_of_nonlinear_flows}.
	This learning algorithm with nonlinear observables is called the Extended Dynamic Mode Decomposition (EDMD)\cite{Williams2015}, based on which many data-driven Koopman controller designs are developed\cite{KORDA_Koopman_MPC,Koopman_Lyapunov_based_MPC,Model_based_control,PEITZ_switched_control,Soft_robot,data-driven_Koopman_H2,local_Koopman,soft_robot_arm,tube_based_MPC,Koopman_generators_Peitz}.
	An important feature of the EDMD algorithm is its convergence property.
	Letting $\mathcal{F}=L_2(\mathcal{X}\times l(\mathcal{U}))$,
	the approximation obtained by EDMD is, under several assumptions, shown to converge to the true Koopman operator in the strong operator topology as the number $M$ of data points and the number $D$ of observables $g_i$ tend to infinity\cite{On_convergence_Klus,on_convergence_of_EDMD}.
	Specifically, this convergence property is stated as follows:
	for $\forall g\in L_2(\mathcal{X}\times l(\mathcal{U}))$,
	\vspace{-0mm}
	\begin{align}
		\lim_{D\rightarrow \infty}
		&\int_{\mathcal{X}\times l(\mathcal{U})} |(P_D^\mu \mathcal{K})P_D^\mu g - \mathcal{K}g|^2d\mu 
		=0,
		&
		\label{eq. convergence of EDMD 1}
	\end{align}
	where $P_D^\mu $ and $\mu$ are the $L_2$ projection onto $\text{span}(g_1,\hspace{-0.5mm}\cdots\hspace{-0.5mm},g_D)$ and a measure with which $L_2(\mathcal{X}\times l(\mathcal{U}))$ is endowed, respectively. 
	The operator $P_D^\mu \mathcal{K}$ in \eqref{eq. convergence of EDMD 1} is related to the finite dimensional approximation $K$ computed by EDMD in the following manner:
	\vspace{-0mm}
	\begin{align}
		&\lim_{M\rightarrow \infty} \|  a^\tr K[g_1\cdots g_D]^\tr - P_D^{\mu}\mathcal{K}g\|=0,&
		\nonumber
		\\
		&
		\forall g=a^\tr [g_1\cdots g_D]^\tr \in \text{span}(g_1,\cdots,g_D),
		a\in \mathbb{R}^D,
		&
		\label{eq. convergence of EDMD 2}
	\end{align}
	where $\|\cdot \|$ is an arbitrary norm on $\text{span}(g_1,\cdots,g_D)$.
	The convergence property \eqref{eq. convergence of EDMD 1} implies that data-driven Koopman models can provide reasonable approximation with sufficiently large $M$ and $D$.
	However, it is not the case for non-autonomous systems as seen in the following remark.  
	\vspace{3mm}
	\begin{remark}
		\label{remark convergenve property does not hold}
		Given feature maps $\tilde{g}(x_k)$,
		the approximation $K_c$ in \eqref{eq. intro to Koopman model} does not possess
		the the convergence property \eqref{eq. convergence of EDMD 1} since for \eqref{eq. convergence of EDMD 1} to hold, it is necessary that $\{g_i\}_{i=1}^D$ is an orthonormal basis of $L_2(\mathcal{X}\times l(\mathcal{U}))$ as $D\rightarrow \infty$\cite{on_convergence_of_EDMD}. 
		As described in \cite{KORDA_Koopman_MPC}, no elements of $\bm{U}=(u_k,u_{k+1},\cdots)$ except for the first one $u_k$ depend on the definition \eqref{eq. def of obs} and it is obvious that they cannot form any basis of $L_2(\mathcal{X}\times l(\mathcal{U}))$.	
	\end{remark}
	\vspace{3mm}

	No convergence property of the model \eqref{eq. intro to Koopman model} or \eqref{eq. Koopman model 1} implies $\|r\|_{L_2}$ may not be negligibly small even if $D$ and $M$ are sufficiently large.
	To this end, we adopt another learning formulation, where the nonlinear feature maps $\tilde{g}$ are also learned from data along with matrices $A$ and $B$.
	\vspace{-0mm}
	
	\subsection{Initial Training: Simultaneous Learning of the Feature Maps and the System Matrices}
	\vspace{-0mm}
	Another class of methods to learn model \eqref{eq. Koopman model 1} estimates the system matrices $A,B$, and the feature maps $\tilde{g}$ simultaneously.
	The resulting models are expected to achieve better predictive accuracy than those of the linear formulations such as EDMD since they can have greater model expressivity with the feature maps $\tilde{g}$ also learned from data along with the matrices $A$ and $B$.
	Especially, the use of neural networks has been shown to be promising to incorporate into the Koopman operator-based modeling, analyses, and control\cite{Learning_DNN_ACC2019,Learning_Koopman_Invariant_Subspaces,Physics-based_robabilistic_learning,deep_learning_representation_CDC,Wiener_MIMO_ID}.

	Hence, the proposed method characterizes the feature maps $\tilde{g}$ in \eqref{eq. Koopman model 1} as a neural network aiming at high predictive accuracy and solves a nonlinear regression problem to learn $A,B$, and $\tilde{g}$, which is formulated as Step \ref{problem. initial training}.
	\begin{table}
		\begin{screen}
			\begin{prob}
				\label{problem. initial training}
				\rm{} \ (Initial Training) 
				\\ 
				Find $g$, $A\in\mathbb{R}^{(n+N)\times (n+N)}$, and $B\in \mathbb{R}^{(n+N)\times p}$ s.t.
				\vspace{-0mm}
				\begin{align}
					&\left\{
					g,\ A,\ B
					\right\}=
					\underset{\left\{
						g,A,B
						\right\}}{
						\text{argmin}
					}\ J(g,A,B),
					&
					\label{eq. argmin of learning}
					\\
					&
					\text{where }
					J(g,A,B)
					:= 
					\lambda_1
					\| 
					AG_x
					+
					BU
					-
					G_y
					\|_F^2
					&\nonumber
					\\
					&
					\hspace{16mm}+
					\lambda_2
					\| 
					W
					(
					AG_x
					+
					BU
					)
					- Y \|_F^2,
					&
					\label{eq. loss of initial training}
					\\
					&
					G_x:=
					[g(x_1)\cdots g(x_{M})],
					\
					U:=
					[u_1\cdots u_{M}],
					&\nonumber
					\\
					&
					G_y:=
					[g(y_1)\cdots g(y_{M})],
					\
					Y
					:=
					[y_1\cdots y_{M}],
					&\nonumber
					\\
					&
					y_k=F(x_k,u_k),
					\
					k=1,\cdots, M,
					&\nonumber
					\\
					&
					W:=
					\left[
					\begin{array}{cc}
						I
						&
						0
					\end{array}
					\right],
					&
					\label{eq. decoder}
					\\[-1ex]
					&
					g:\mathcal{X}\rightarrow \mathbb{R}^{n+N}:
					x_k\mapsto 
					\left[
					\begin{array}{c}
						x_k
						\\
						\tilde{g}(x_k)
					\end{array}
					\right],&
					\label{eq. g subsystemized}
					\\
					&
					\tilde{g}(x_k)= \text{NN}(x_k;w)
					\ (\text{a neural network}).
					&
				\end{align}
			\end{prob}
		\end{screen} 
	\end{table}

	The loss function $J(g,A,B)$ in \eqref{eq. loss of initial training} consists of two terms. The first one multiplied by a hyperparameter $\lambda_1$ accounts for (approximately) minimizing $\|r\|_{L_2}$ in \eqref{eq. norm of r}. 
	The other one with a hyperparameter $\lambda_2$ intends to directly minimize the state-reconstruction error by applying the decoder $W$ to the model prediction in order to compare it with the state $y_k$.
	While the decoder may be also characterized by a neural network in general, the specific structure \eqref{eq. def of obs} of observables, which explicitly includes the state $x_k$, allows an analytical expression of the decoder $W$ in \eqref{eq. decoder}.

	The nonlinear feature maps $\tilde{g}$ are defined as a fully-connected feed-forward neural network:
	\vspace{-0mm}
	\begin{align}
		&\text{NN}\hspace{-0.3mm}(\hspace{-0.5mm}x_k\hspace{-0.2mm};\hspace{-0.5mm}w\hspace{-0.5mm})
		\hspace{-1mm}
		:=
		\hspace{-1mm}
		\sigma 
		\hspace{-0.7mm}
		\left(
		\Theta_{l} 
		\sigma
		\hspace{-0.5mm}
		\left(
		\hspace{-0.5mm}\cdots\hspace{-0.5mm} 
		\Theta_2
		\sigma
		\hspace{-0.5mm}
		\left(
		\Theta_1 x_k \hspace{-1mm}+\hspace{-0.5mm} b_1
		\right)
		\hspace{-0.7mm}+\hspace{-0.8mm} b_2
		\hspace{-0.5mm}\cdots\hspace{-0.5mm}
		\right)
		\hspace{-0.7mm}+\hspace{-0.7mm} b_l
		\right)\hspace{-1mm},
		&
		\label{eq. NN}
		\\
		&
		w:=\{ \Theta_{i}, b_i \}_{i=1}^l,
		&
	\end{align}
	where $\Theta_i$, $b_i$, and $\sigma$ are a kernel, a bias, and an activation function, respectively.
	In this paper, we only consider continuous activation functions so that \eqref{eq. NN} is continuous.
	\vspace{-0mm}

	\subsection{Second Training: Modification of the Initial Model}
	\vspace{-0mm}
	\label{section. second training}
	\subsubsection{State-Prediction Accuracy}
	\ \\
	Characterizing observables by a neural network allows greater expressivity, which can lead to higher accuracy of the data-driven model if the optimization problem is feasible. 
	On the other hand, from the controller design perspective, including high-order nonlinearities in the observables is not preferable since it may introduce unexpected or undesirable effect on the closed-loop system due to the modeling error, which could even alter the actual closed-loop system unstable, while it is easier for the state prediction to eliminate the modeling error.
	Specifically, the state prediction is implemented as follows:
	\vspace{-0mm}
	\begin{align}
		&x^{\text{est}}_{k+1}
		\hspace{-1mm}=\hspace{-1mm}
		\underset{=W}{
			\underbrace{
				\left[
				\begin{array}{cc}
					\hspace{-1mm}I\hspace{-0.2mm} & \hspace{-0.2mm}0\hspace{-1mm}
				\end{array}
				\right]}}
		\hspace{-1mm}
		\left\{
		\hspace{-0.5mm}
		A
		\left[
		\begin{array}{c}
			\hspace{-1mm}x^{\text{est}}_{k}\hspace{-1mm}
			\\
			\hspace{-1mm}\tilde{g}(x^{\text{est}}_{k})\hspace{-1mm}
		\end{array}
		\right]
		\hspace{-1mm}+\hspace{-1mm}
		Bu_k
		\hspace{-0.5mm}
		\right\}\hspace{-1mm},
		k=0,\hspace{-0.5mm}1,\cdots\hspace{-0.5mm},
		&
		\label{eq. state prediction}
	\end{align}
	where $x^\text{est}_k$ denotes the state prediction at time $k$ and $x^\text{est}_0=x_0$ is given.
	From \eqref{eq. state prediction}, the modeling error in the state prediction is evaluated as:
	\vspace{-0mm}
	\begin{align}
		x_{k+1}
		\hspace{-1mm}=\hspace{-1mm}
		\left[
		\begin{array}{cc}
			\hspace{-1mm}I\hspace{-0.5mm} & \hspace{-0.5mm}0\hspace{-1mm}
		\end{array}
		\right]
		\hspace{-1mm}
		\left\{
		\hspace{-1mm}
		A
		\left[
		\begin{array}{c}
			\hspace{-1mm}x_{k}\hspace{-1mm}
			\\
			\hspace{-1mm}\tilde{g}(x_{k})\hspace{-1mm}
		\end{array}
		\right]
		\hspace{-1mm}+\hspace{-1mm}
		Bu_k
		\hspace{-1mm}
		\right\}
		\hspace{-1mm}+\hspace{-1mm}
		\left[
		\begin{array}{cc}
			\hspace{-1mm}I\hspace{-0.5mm} & \hspace{-0.5mm}0\hspace{-1mm}
		\end{array}
		\right]
		\hspace{-1mm}
		r(x_k\hspace{-.2mm},\hspace{-0.2mm}u_k).
		\label{eq. state prediction exact relation}
	\end{align}
	
	\vspace{3mm}
	\begin{prop}
		\label{prop. state prediction}
		Let $x_k$ and $u_k$ be arbitrary.
		There exist $\tilde{g}$, $A_1\in \mathbb{R}^{n\times n}$, $A_2\in \mathbb{R}^{n\times N}$ and $B_1\in \mathbb{R}^{n\times p}$ s.t.
		\vspace{-0mm}
		\begin{align}
			F(x_k,u_k)=
			A_1 x_k + A_2 \tilde{g}(x_k) + B_1 u_k,
			\label{eq. condition for perfect state prediction}
		\end{align}
		if and only if
		\vspace{-0mm}
		\begin{align}
			\left[
			\begin{array}{cc}
				\hspace{-1mm}I\hspace{-0.5mm} & \hspace{-0.5mm}0\hspace{-1mm}
			\end{array}
			\right]
			r(x_k,u_k)=0,
		\end{align}
		i.e., \eqref{eq. state prediction} has no state prediction error.
	\end{prop}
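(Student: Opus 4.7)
My plan is to prove the proposition by simply unpacking the block structure of the modeling error $r$ in \eqref{eq. def of modeling error} and matching it against the hypothesized linear relation \eqref{eq. condition for perfect state prediction}. Since the observables are partitioned into a state block of dimension $n$ and a lifted block $\tilde{g}(x_k)$ of dimension $N$, the matrices $A$ and $B$ admit a natural block decomposition
\begin{align}
A = \begin{bmatrix} A_1 & A_2 \\ A_3 & A_4 \end{bmatrix}, \qquad
B = \begin{bmatrix} B_1 \\ B_2 \end{bmatrix},
\end{align}
with $A_1\in\mathbb{R}^{n\times n}$, $A_2\in\mathbb{R}^{n\times N}$, $B_1\in\mathbb{R}^{n\times p}$, and the lower blocks having compatible dimensions.

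The first step is to substitute this block decomposition into \eqref{eq. def of modeling error} and pre-multiply by $[I\ 0]$; since this projector extracts exactly the first $n$ rows, I obtain
\begin{align}
[I\ 0]\, r(x_k,u_k) = F(x_k,u_k) - A_1 x_k - A_2 \tilde{g}(x_k) - B_1 u_k.
\end{align}
This identity is the crux of the argument and both directions of the equivalence follow directly from it.

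For the ``if'' direction, assuming $[I\ 0]\, r(x_k,u_k)=0$, one reads off \eqref{eq. condition for perfect state prediction} immediately by taking $A_1, A_2, B_1$ as the upper blocks of the learned $A$ and $B$ together with the trained $\tilde{g}$. For the ``only if'' direction, given $\tilde{g}$, $A_1$, $A_2$, $B_1$ satisfying \eqref{eq. condition for perfect state prediction}, I construct $A$ and $B$ whose upper blocks coincide with $[A_1\ A_2]$ and $B_1$, respectively (the lower blocks $A_3, A_4, B_2$ may be chosen arbitrarily since they do not affect $[I\ 0]\, r$); substituting into the displayed identity then yields $[I\ 0]\, r(x_k,u_k)=0$. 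Finally, by comparing \eqref{eq. state prediction} with \eqref{eq. state prediction exact relation}, the vanishing of $[I\ 0]\,r(x_k,u_k)$ is exactly the statement that the state prediction is error-free at $(x_k,u_k)$.

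There is essentially no hard step here; the entire argument is a bookkeeping exercise in block matrix algebra. The only subtlety worth flagging in the write-up is that the lower blocks of $A$ and $B$ play no role in state prediction because they are annihilated by $W=[I\ 0]$, which is precisely what enables the equivalence to be phrased purely in terms of the existence of $A_1, A_2, B_1$ and the feature map $\tilde{g}$.
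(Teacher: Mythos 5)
Your proof is correct and follows essentially the same route as the paper's: both extract the first $n$ rows of \eqref{eq. state prediction exact relation} to obtain the identity $[I\ 0]\,r(x_k,u_k)=F(x_k,u_k)-A_1x_k-A_2\tilde{g}(x_k)-B_1u_k$, from which both directions of the equivalence are immediate. Your added remark that the lower blocks $A_3,A_4,B_2$ are annihilated by $W$ and hence can be chosen arbitrarily is a harmless elaboration of what the paper leaves implicit.
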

	\begin{proof}
		Let $[A_1\ A_2]\in \mathbb{R}^{n\times (n+N)}$ and $B_1\in \mathbb{R}^{n\times p}$ be the first $n$ rows of $A$ and $B$ in \eqref{eq. state prediction exact relation}, respectively.
		From \eqref{eq. governing eq}, the equation \eqref{eq. state prediction exact relation} reads
		\vspace{-0mm}
		\begin{align}
			F(x_k,u_k)
			\hspace{-1mm}=\hspace{-1mm}
			A_1 x_k \hspace{-1mm}+\hspace{-1mm} A_2 \tilde{g}(x_k) \hspace{-1mm}+\hspace{-1mm} B_1 u_k
			\hspace{-1mm}+\hspace{-1mm}
			\left[
			\begin{array}{cc}
				\hspace{-1mm}I\hspace{-0.7mm} & \hspace{-0.7mm}0\hspace{-1mm}
			\end{array}
			\right]\hspace{-1mm}
			r(x_k,u_k),
		\end{align}
		which implies the statement of the proposition.
	\end{proof}
	\vspace{3mm}

	Note that there exist $\tilde{g}$, $A_1$, $A_2$, and $B_1$ that satisfy \eqref{eq. condition for perfect state prediction} with $A_2=0$ or $\tilde{g}(x_k)\equiv 0$ if and only if the original dynamics \eqref{eq. governing eq} is linear.

	\vspace{3mm}
		\begin{col}
			\label{col. state prediction}
			If \eqref{eq. governing eq} is of the linear form w.r.t. input: $F(x_k,u_k)=\tilde{F}(x_k)+\tilde{B}u_k$ ($\tilde{B}\in \mathbb{R}^{n\times p}$),
			including an enough number of different features to reconstruct $\tilde{F}$ is necessary and sufficient for \eqref{eq. state prediction} to be able to achieve zero state-prediction error.
			In case of control-affine dynamics s.t. $\tilde{B}=\tilde{B}(x_k)$ is dependent on $x_k$, the same argument holds if the state-prediction is implemented with $u_k\equiv 0$. 
		\end{col}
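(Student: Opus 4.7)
The plan is to deduce the corollary directly from Proposition 2, which already characterizes zero state-prediction error by the existence of $\tilde{g}$, $A_1$, $A_2$, $B_1$ satisfying
\begin{align*}
F(x_k,u_k) = A_1 x_k + A_2 \tilde{g}(x_k) + B_1 u_k.
\end{align*}
The corollary only needs to interpret this condition under the input-affine structural assumption on $F$, so the heart of the argument is an algebraic matching of coefficients in $u_k$ and a restatement of what it means for $\tilde{F}$ to be reconstructible by the features.

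First I would substitute $F(x_k,u_k) = \tilde{F}(x_k) + \tilde{B} u_k$ into the identity from Proposition 2 and use the fact that $x_k$ and $u_k$ are arbitrary to separate the equation into an input-independent part and an input-linear part. Matching coefficients of $u_k$ forces $B_1 = \tilde{B}$ (which exists trivially since $\tilde{B}$ is a known $n\times p$ matrix), while the remaining part becomes $\tilde{F}(x_k) = A_1 x_k + A_2 \tilde{g}(x_k)$. This is precisely the statement that $\tilde{F}$ can be expressed as a linear combination of the coordinate functions $x_k$ and the components of $\tilde{g}(x_k)$, i.e. $\tilde{F}$ lies in the span of the learned observables together with the state. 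Conversely, if $\tilde{g}$ contains enough features for such a decomposition to exist, then one can simply take $B_1 = \tilde{B}$ and read off $A_1, A_2$ from the decomposition, and Proposition 2 yields zero state-prediction error. This gives the equivalence claimed in the first sentence of the corollary.

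For the control-affine case $\tilde{B} = \tilde{B}(x_k)$, the input-dependence can no longer be separated out with a single constant matrix $B_1$, so the reduction is obtained by restricting attention to $u_k \equiv 0$. Under this restriction the state-prediction recursion \eqref{eq. state prediction} reduces to the autonomous identity $\tilde{F}(x_k) = A_1 x_k + A_2 \tilde{g}(x_k)$, and the previous argument applies verbatim to conclude that reconstructibility of $\tilde{F}$ by the features is equivalent to zero state-prediction error along the $u_k\equiv 0$ trajectory.

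The main obstacle, such as it is, is to phrase precisely the informal notion of ``enough features to reconstruct $\tilde{F}$''; I would formalize this as requiring that each component of $\tilde{F}$ belongs to the linear span of the entries of $x_k$ and $\tilde{g}(x_k)$, which makes the coefficient-matching argument rigorous and avoids any appeal to universal approximation. Everything else is a direct specialization of Proposition 2 and requires no new machinery.
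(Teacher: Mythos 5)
Your proof is correct and follows essentially the same route the paper intends: the corollary is left unproved in the paper as an immediate consequence of Proposition~\ref{prop. state prediction}, and your coefficient-matching in $u_k$ (forcing $B_1=\tilde{B}$ and reducing the condition to $\tilde{F}(x_k)=A_1x_k+A_2\tilde{g}(x_k)$, with the $u_k\equiv 0$ restriction handling the control-affine case) is exactly that specialization, with the helpful added precision of formalizing ``enough features'' as membership of each component of $\tilde{F}$ in the span of the entries of $x_k$ and $\tilde{g}(x_k)$.
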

	\vspace{3mm}

		Although the specific model structure we adopt in \eqref{eq. Koopman model 1} limits the validity of accurate state-prediction to certain classes of dynamics as seen in Corollary \ref{col. state prediction}, it is considered as a preferable property from the controller design perspective since it allows to utilize linear systems theories in the new coordinates $[x_k^\tr\ \tilde{g}(x_k)^\tr]^\tr$  to control possibly nonlinear dynamics.
		However, it should be emphasized that the choice of model structure is a trade-off relation between the simple and practical controller designs and applicability of the method to actual nonlinear systems, i.e., such linear controller designs in the embedded space will result in poor control performance if the model has a large modeling error.

	\subsubsection{Accuracy in Terms of Closed-Loop Dynamics}
	\ \\
	While the accuracy of the state prediction of the model \eqref{eq. Koopman model 1} may be evaluated by Proposition \ref{prop. state prediction}, its accuracy in terms of controller design is characterized in a different way.
	Note that the system to be controlled by the feedback controller $\bm{K}$ in \eqref{eq. controller} is assumed to be a linear time-invariant system:
	\vspace{-0mm}
	\begin{align}
		\xi_{k+1} = A\xi_k + Bu_k,
		\ \ 
		\xi_k \in \mathbb{R}^{n+N},
		\label{eq. control model}
	\end{align}
	and we can only ensure properties of the closed-loop system:
	\vspace{-0mm}
	\begin{align}
		\xi_{k+1} 
		&= 
		(A+B\bm{K})\xi_{k}
		=
		(A+B\bm{K})^{k+1}\xi_{0}.
		&
	\end{align}

	Clearly, \eqref{eq. control model} is identical to the Koopman control model \eqref{eq. Koopman model 1} if $\|r\|_{L_2}=0$ and $\xi_0=[x_0^\tr\ \tilde{g}(x_0)^\tr]^\tr$.
	However, in general cases where $r(x_k,u_k)\not\equiv 0$, the modeling error may persist at any time and accumulate as follows:
	\vspace{-0mm}
	\begin{align}
		\left[
		\begin{array}{c}
			\hspace{-1mm}x_{k+1}\hspace{-1mm}
			\\
			\hspace{-1mm}\tilde{g}(x_{k+1})\hspace{-1mm}
		\end{array}
		\right]
		&=
		(A+B\bm{K})^{k+1}
		\left[
		\begin{array}{c}
			\hspace{-1mm}x_{0}\hspace{-1mm}
			\\
			\hspace{-1mm}\tilde{g}(x_{0})\hspace{-1mm}
		\end{array}
		\right]
		&\nonumber
		\\
		&\hspace{-15mm}
		+
		\sum_{i=0}^{k} (A+B\bm{K})^i r
		\left(
		x_{k-i}, \bm{K}
		\left[
		\begin{array}{c}
			\hspace{-1mm}x_{k-i}\hspace{-1mm}
			\\
			\hspace{-1mm}\tilde{g}(x_{k-i})\hspace{-1mm}
		\end{array}
		\right]
		\right),
		&
		\label{eq. error accumulation of control model}
	\end{align}
	in which \eqref{eq. controller} is substituted.
	The second term of the r.h.s. of \eqref{eq. error accumulation of control model} represents the discrepancy between \eqref{eq. control model} and \eqref{eq. Koopman model 1}, which directly leads to degradation of controller performance.
	Since the error propagation in \eqref{eq. error accumulation of control model} reflects all components of $r(x_k,u_k)$ unlike the state prediction 
	as in \eqref{eq. state prediction exact relation},
	the actual closed-loop system could greatly suffer from the modeling error depending on the design and/or the dimensions of $\tilde{g}$, as illustrated in the example of Section \ref{section. motivating example}.

        \begin{figure}[t]
            \centering
            \includegraphics[width=\columnwidth]{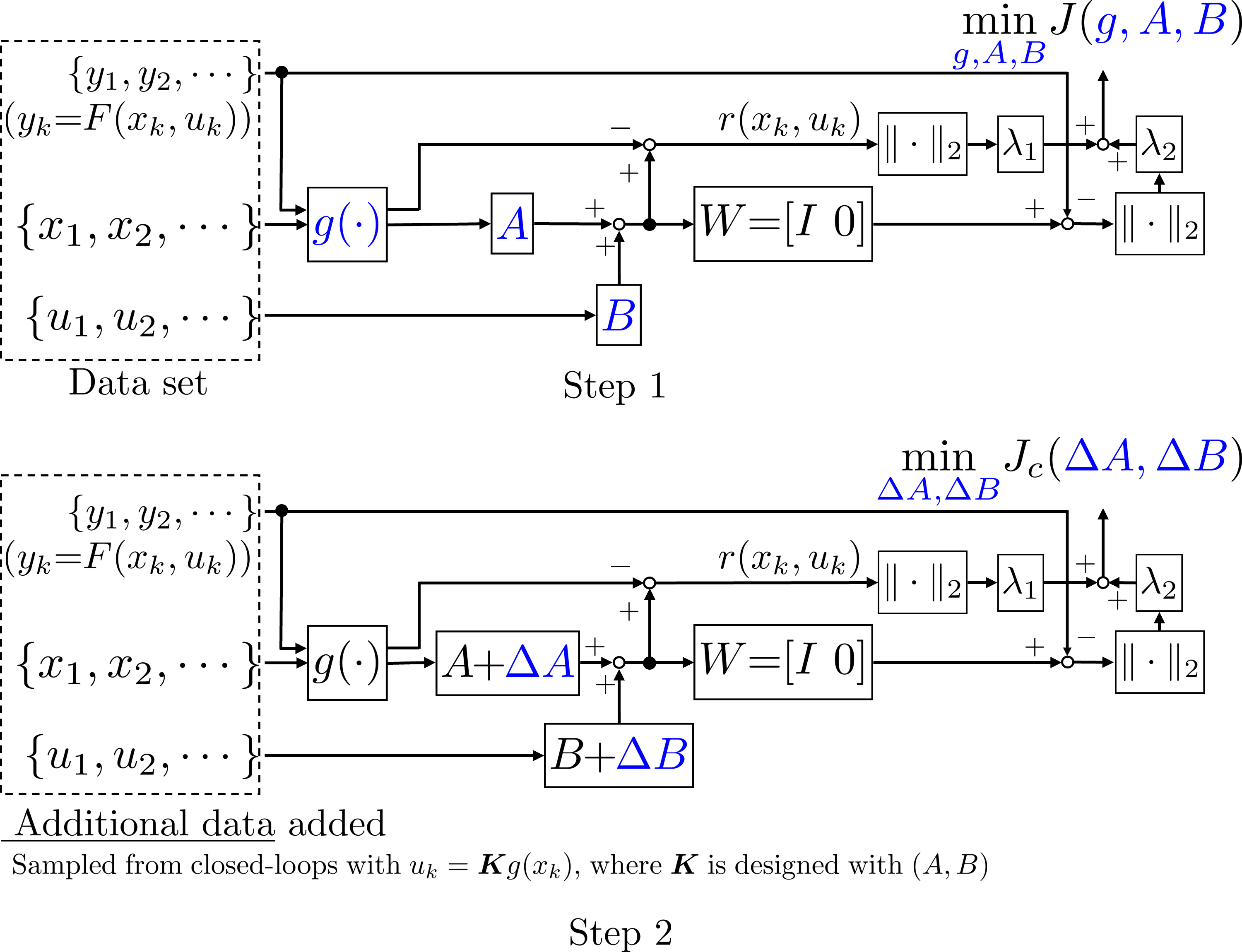}
            \caption{Schematic of the learning procedure of the proposed method. In Step \ref{prob. second training}, constraints $\| \Delta A\|\leq \epsilon_A$ and $\| \Delta B\|\leq \epsilon_B$ are imposed.
            }
            \label{fig. flow_chart}
        \end{figure}

	In the proposed method, considering that the controller performance of the Koopman control model \eqref{eq. Koopman model 1} can deteriorate due to the modeling error,
	the second training process formulated as Step \ref{prob. second training} follows Step \ref{problem. initial training} in case the control objective is not achieved by the initially learned model.
	Specifically, data points sampled from a closed-loop system formed by \eqref{eq. controller}, 
		for which the initially learned $A$ and $B$ are used to design a controller, are 
		added to the initial data set
	so that the second learning problem can directly minimize the modeling error in the regime of closed-loop dynamics \eqref{eq. error accumulation of control model}.

	The modification of the initial model is implemented by updating matrices $A$ and $B$ to
	$A+\Delta A$ and $B+\Delta B$, where we impose the constraint that the modified model retains dynamics close to the initial one. 
	This constraint prevents the additional learning, which retrains the model with 
	additional
	closed-loop data, from degrading the high predictive accuracy obtained in the initial training.
	Specifically, 
	$\Delta A$ and $\Delta B$ have the constraints that their induced 2-norms $\| \Delta A \|:=\text{sup}_{\|x\|=1}\| \Delta A x \|_2$ and $\|\Delta B\|$ are bounded by hyperparameters $\epsilon_A$ and $\epsilon_B$, respectively.
		Finally, the controller gain is recomputed with the updated parameters $(A+\Delta A,B+\Delta B)$.
        The schematic of the learning procedure of the proposed method is shown in Fig. \ref{fig. flow_chart}.
	\begin{table}
		\centering 
		\begin{screen}
			\begin{prob}
				\label{prob. second training}
				\rm{} (Modification of the initial model)
				\\
				Given $g$, $A$, $B$, and a controller gain $\bm{K}$ that is designed with ($A$, $B$), find $\Delta A$ and $\Delta B$ s.t.
				\vspace{-0mm}
				\begin{align}
					&\left\{
					\Delta A,\ \Delta B
					\right\}:=
					\underset{\left\{
						\Delta A,\Delta B
						\right\}}{
						\text{argmin}
					}\ J_c(\Delta A,\Delta B),
					&
					\label{eq. argmin of learning 2}
					\\
					&
					\text{subject to:}	
					\hspace{5mm}
					\| \Delta A \|\leq \epsilon_A,
					&
					\\
					&\hspace{20mm}
					\| \Delta B \|\leq \epsilon_B,
					&
					\\[-1ex]
					&\text{where}&\nonumber
					\\[-1ex]
					&J_c(\Delta A,\Delta B)
					\hspace{-1mm}:=\hspace{-1mm} 
					\hspace{0mm}
					\lambda_1
					\| 
					(A \hspace{-1mm}+\hspace{-1mm} \Delta A)
					G_x
					\hspace{-0.5mm}+\hspace{-0.5mm}
					(B \hspace{-1mm}+\hspace{-1mm} \Delta B)U
					\hspace{-0.5mm}-\hspace{-0.5mm}
					G_y
					\|_F^2
					&\nonumber
					\\
					&\hspace{-1mm}
					+\hspace{-1mm}
					\lambda_2
					\| 
					W
					(
					(A \hspace{-1mm}+\hspace{-1mm} \Delta A)
					G_x
					\hspace{-0.5mm} + \hspace{-0.5mm} 
					(B \hspace{-1mm}+\hspace{-1mm} \Delta B)U
					)
					- Y \|_F^2,
					&
					\label{eq. loss of second training}
					\\
					&
					y_k=F(x_k,u_k),
					&\nonumber
				\end{align}
					and part of input data is generated by the feedback law $u_k=\bm{K}g(x_k)$. 
			\end{prob}
		\end{screen} 
		\vspace{-0mm}
	\end{table}

	\subsection{Characteristics to Measure the Accuracy}
		As stated in Proposition \ref{prop. invariant subspace and existence of finite dimensional K}, the existence of an invariant subspace under the action of the Koopman operator is an essential factor for the modeling procedure, which determines if there exists a finite-dimensional model that can embed the original dynamics linearly in the subspace spanned by observables.
		In this regard, in addition to the modeling error defined in \eqref{eq. def of modeling error}, one can also consider evaluating to what extent the subspace in which the model coordinates are defined is close to being invariant to measure the accuracy of modeling.
		Specifically, given some finite-dimensional subspace $\mathcal{F}_\text{sub}\subset\mathcal{F}$, it is obvious by Proposition \ref{prop. invariant subspace and existence of finite dimensional K} that if $\mathcal{F}_\text{sub}$ is shown to be invariant under the action of $\mathcal{K}$, the realization of an exact model without any modeling error is possible with an arbitrary choice of observables from $\mathcal{F}_\text{sub}$. Also, finding such a model should be relatively easy since it is reduced to a linear regression problem after we choose observables from $\mathcal{F}_\text{sub}$.

		Therefore, the invariance property of the chosen subspace may be considered as a more general characteristic to evaluate the modeling procedure itself than the modeling error $r(x,u)$ defined in \eqref{eq. def of modeling error}.		 
		In fact, if we change the observables of the Model 2 in the example of Section \ref{section. motivating example} from $[x_{k}\ {x^2_k}e^{-x_{k}}\ x^2_k]^\tr$ to $[\alpha x_{k}\ \alpha {x^2_k}e^{-x_{k}}\ \alpha x^2_k]^\tr$
		for some $\alpha \in \mathbb{R}$, norms of the modeling error $r(x,u)$ result in different values as shown in Fig. \ref{fig. ex1 different feature maps} despite the fact that both models should be considered as essentially the same ones since the choices of observables are the same up to multiplicity by $\alpha$.
		This result may be also inferred from the following error bounds analysis.
		\vspace{3mm}
		\begin{prop}
			\label{prop. an error bound}
			Let $g$ be observables defined in \eqref{eq. g subsystemized} and suppose $\mu_x$ and $\mu_u$ are positive measures with compact supports s.t. 
			$(\mathcal{X}, \mathcal{A}_x, \mu_x)$ and $(\mathcal{U}, \mathcal{A}_u, \mu_u)$ are measure spaces with any appropriate $\sigma$-algebras $\mathcal{A}_x$ and $\mathcal{A}_u$, respectively.
			Also, let $\mu:=\mu_x \mu_u$ be their product measure.
			If $g$ is measurable and continuous, the following holds:
			\begin{align}
				&\| r \|_{L_2}^2
				\leq 
				\| g\circ F \|_{L_2}^2
				+
				\| [A\ B] \|
				\left\{
				\| [A\ B] \|
				\| h \|_{L_2}^2
				\right.&\nonumber
				\\
				&\left.
				\hspace{2cm}
				+
				2
				\langle 
				\| (g\circ F)(\cdot) \|_2, \| h(\cdot) \|_2
				\rangle_{L_2} 
				\right\},&
				\label{eq. error bound}
			\end{align}
			where $h:\mathcal{X}\times \mathcal{U}\rightarrow \mathbb{R}^{N+p}$ is a vector-valued function s.t.
			\begin{align}
				h(x,u):=
				\left[
				\begin{array}{c}
					g(x)
					\\
					u
				\end{array}
				\right],
			\end{align}
			and
			the function $\| (g\circ F)(\cdot) \|_2:\mathbb{R}^{n+p}\rightarrow \mathbb{R}_{\geq 0}:(x,u)\mapsto \| (g\circ F)(x,u) \|_2$ denotes an $l_2$ norm on $\mathbb{R}^{n+p}$, so does $\| h(\cdot) \|_2$.
		\end{prop}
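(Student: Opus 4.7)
The plan is to rewrite $r$ in a form that separates the ``target'' $g\circ F$ from the linear predictor, then expand the squared norm and apply Cauchy--Schwarz twice: once pointwise via the induced operator norm, and once on the $L_2$ inner product.

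First I would observe that using the block structure of $g$ in \eqref{eq. g subsystemized} and the definition of $h$, the modeling error \eqref{eq. def of modeling error} rewrites compactly as
\begin{align}
r(x,u) = (g\circ F)(x,u) - [A\ B]\, h(x,u).
\end{align}
Pointwise expansion of the Euclidean norm then gives
\begin{align}
\| r(x,u) \|_2^2
&= \| (g\circ F)(x,u) \|_2^2
- 2 \langle (g\circ F)(x,u),\, [A\ B] h(x,u) \rangle \nonumber \\
&\quad + \| [A\ B] h(x,u) \|_2^2.
\end{align}

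Next I would bound the cross term and the quadratic term using the induced 2-norm of $[A\ B]$. By the definition of the induced matrix norm, $\| [A\ B] h(x,u)\|_2 \leq \| [A\ B] \| \cdot \| h(x,u) \|_2$, which immediately controls the third term by $\| [A\ B] \|^2 \| h(x,u) \|_2^2$. Applying the Cauchy--Schwarz inequality on $\mathbb{R}^{n+N}$ to the middle term, followed by the same induced-norm bound, yields
\begin{align}
\bigl| \langle (g\circ F)(x,u),\, [A\ B] h(x,u)\rangle \bigr|
\leq \| [A\ B] \| \cdot \| (g\circ F)(x,u) \|_2 \cdot \| h(x,u) \|_2.
\end{align}
Combining the three pointwise bounds and integrating over $\mathcal{X}\times \mathcal{U}$ with respect to $\mu=\mu_x\mu_u$, the squared-norm and squared-$h$ terms assemble into $\| g\circ F \|_{L_2}^2$ and $\| [A\ B]\|^2 \| h\|_{L_2}^2$, while the cross term becomes $2 \| [A\ B]\| \langle \|(g\circ F)(\cdot)\|_2, \|h(\cdot)\|_2\rangle_{L_2}$. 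Factoring $\|[A\ B]\|$ out of the last two contributions yields precisely \eqref{eq. error bound}.

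The only point that requires care, rather than being routine, is justifying that all quantities appearing in the bound are finite so that the inequality is meaningful. Here the measurability and continuity hypothesis on $g$, together with continuity of $F$ (inherited from the standing assumptions on the flow \eqref{eq. governing eq in ODE}), imply that $\|(g\circ F)(\cdot)\|_2$ and $\|h(\cdot)\|_2$ are continuous on $\mathcal{X}\times \mathcal{U}$; the compact support assumption on $\mu_x$ and $\mu_u$ then restricts the integrals to a compact set, on which continuous functions are bounded and hence square-integrable. Consequently the application of Cauchy--Schwarz in $L_2(\mu)$ to the cross term is valid, and all norms on the right-hand side of \eqref{eq. error bound} are finite. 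I do not expect a genuine obstacle beyond this bookkeeping; the essential content of the proposition is the two-step Cauchy--Schwarz argument described above.
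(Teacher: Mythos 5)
Your proposal is correct and follows essentially the same route as the paper's proof: the paper applies the triangle inequality to $\|r(x,u)\|_2 = \|(g\circ F)(x,u) - [A\ B]h(x,u)\|_2$ and squares, which yields exactly the pointwise bound you obtain by expanding the square and applying Cauchy--Schwarz plus the induced-norm estimate, and both arguments then integrate against $\mu$ after checking that $g\circ F$, $h$, and the scalar functions $\|(g\circ F)(\cdot)\|_2$, $\|h(\cdot)\|_2$ lie in $L_2$ via the compact-support and continuity hypotheses. The paper carries out the integrability bookkeeping in more explicit detail, but your justification covers the same ground.
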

		\begin{proof}
			See Appendix \ref{appendix proof of prop 3 an error bound}.
		\end{proof}
		\vspace{3mm} 
		Note that the measures $\mu_x$ and $\mu_u$ in Proposition \ref{prop. an error bound} correspond to probability distributions from which we sample data points $x_k$ and $u_k$, respectively.
		\vspace{3mm}
		\begin{remark}
			Given a model of the form \eqref{eq. Koopman model 1}, if we construct another model with the same observables but multiplied by some $\alpha \neq 0$ s.t. $[\alpha x_k^\tr\ \alpha \tilde{g}(x_k)^\tr]^\tr$, error bounds given by Proposition \ref{prop. an error bound} can be different for these essentially same models.
			For instance, by replacing $B$ in \eqref{eq. Koopman model 1} with $\alpha B$, both models represent the exact same difference equation up to multiplicity $\alpha$, while the corresponding error bounds may be different, e.g., $\| g\circ F \|_{L_2}$ and $\| [A\ B] \|$ in \eqref{eq. error bound} become $|\alpha|\| g\circ F \|_{L_2}$ and $\| [A\ \alpha B] \|$, etc.
		\end{remark}
		\vspace{3mm} 
		Thus, it should be emphasized that the modeling error $r(x,u)$ we adopt in this paper is specific to individual learning results and error analyses may not be robust to unessential variations of models.
		However, it is useful to incorporate into the modeling procedure since it depends on all model parameters and optimization problems can be posed in a natural way based on it, as seen in the loss function $J(g,A,B)$ in \eqref{eq. loss of initial training}.
		Devising characteristics that are both appropriate to measure the accuracy of models and useful to formulate the learning problem is left for future work.
	
	\begin{figure}
		\centering
		\begin{subfigure}{0.45\linewidth}
			\centering
			\includegraphics[width=0.95\linewidth]{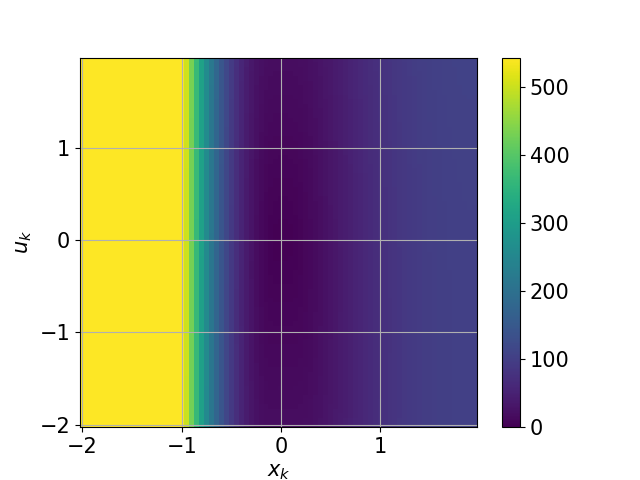}
			\caption{$\alpha=10$.}
		\end{subfigure}
		\begin{subfigure}{0.45\linewidth}
			\centering
			\includegraphics[width=0.95\linewidth]{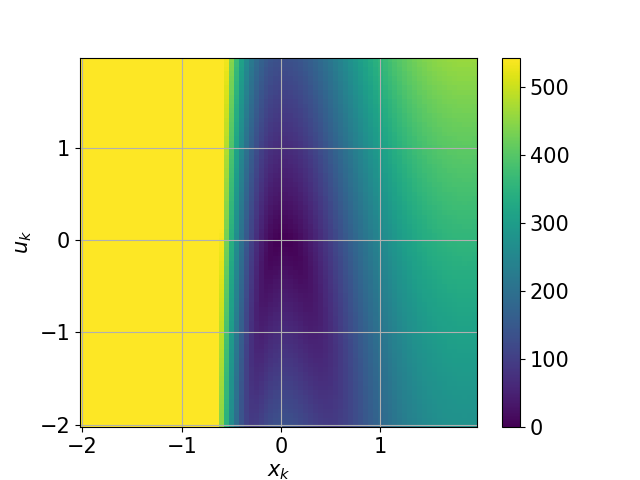}
			\caption{$\alpha=50$.}
		\end{subfigure}
		\caption{Heat maps of $\|r(x_k,u_k)\|_2$ of the Model 2 \eqref{eq. r2} leaned with observables $g(x_k)=[\alpha x_k\ \alpha {x^2_k}e^{-x_{k}}\ \alpha x^2_k]^\tr$. Also see Fig. \ref{subfig. ex1 heat map r2} for comparison.}
		\label{fig. ex1 different feature maps}
	\end{figure}
	
	\vspace{-0mm}
	\subsection{Restricting Inputs of the Data Set}
	\vspace{0mm}
	\label{section. using restricted inputs}
	In addition to the modification of the initially learned model, we also propose a data sampling strategy that generates the input $u_k$ deterministically such that $(u_k,u_{k+1},\cdots)$ will be a sequence of data points sampled from continuous functions. 
	From the modeling perspective, it is in general advisable to sample data points $(x_k, u_k)$ randomly, i.e., from some probability distribution.
	Indeed, assuming that data points $(x_k, u_k)$ are i.i.d. random variables, it is confirmed that minimizing the loss function $J(g,A,B)$ in \eqref{eq. loss of initial training} corresponds to minimizing the norm \eqref{eq. norm of r} of the modeling error $r$ since the first term of $J(g,A,B)$ is related to $\|r\|_{L_2}$ as:
	\vspace{-0mm}
	\begin{align}
		&
        \cfrac{1}{M}
		\| 
		AG_x
		\hspace{-1mm}+\hspace{-1mm}
		BU
		\hspace{-1mm}-\hspace{-1mm}
		G_y
		\|_F^2
		=
		\cfrac{1}{M} \sum_{k=1}^{M} \| r(x_k,u_k) \|_2^2
		&\nonumber
\\
		\overset{\text{a.s.}}{
			\rightarrow
		}
		&
        \int_{\mathcal{X}\times \mathcal{U}}
		\hspace{-2mm}
		\| r(x,u) \|_2^2 dxdu
		\ \ (M\hspace{-1mm}\rightarrow\hspace{-1mm} \infty )
   =
		\|r\|_{L_2}^2,
		&
		\label{eq. relation of loss and the norm of r}
	\end{align}
	where the almost sure convergence follows from the strong law of large numbers.

	However, it is often the case that sampling a very large number of data points across the entire space is not practical due to limitations of experimental or computational resources.
	For non-autonomous systems, the space from which the data $(x_k,u_k)$ is sampled is the product space $\mathcal{X}\times \mathcal{U}$, not the original state space $\mathcal{X}$, and it is especially difficult to sample enough data in applications.
	As a result, learned models may be overfitted or biased, which do not necessarily lead to the modeling error profile consistent with
	the analysis in 
	\eqref{eq. relation of loss and the norm of r}.

	In this paper, we propose to only use deterministic $u_k$ sampled from continuous functions.
	Since the solution $x(t)$ to \eqref{eq. governing eq in ODE} is assumed to be continuous and $\tilde{g}(x)$ defined by \eqref{eq. NN} is also continuous, the control inputs \eqref{eq. controller} are always discretized points sampled from continuous functions. 
	Hence, for the same reason as that of the modification of the initial model in Section \ref{section. second training}, 
	we only use $u_k$ sampled from continuous functions so that Steps \ref{problem. initial training} and \ref{prob. second training} can minimize losses over possible regime of dynamics realized by the controller \eqref{eq. controller} only.

	\vspace{-0mm}
	\section{Numerical Examples}
	\vspace{-0mm}
	\label{section. numerical examples}
	In this section, 
		state-prediction is only implemented with $u_k\equiv 0$ for simplicity.
	Also,
	the control objective is defined as stabilizing the system at the origin while minimizing the cost and LQR is used for the controller design.
	
	\vspace{-0mm}
	\subsection{Simple Pendulum}
	\vspace{-0mm}
	
	\begin{figure}[b]
		\centering
		\begin{subfigure}{0.45\linewidth}
			\centering
			\includegraphics[width=0.95\linewidth]{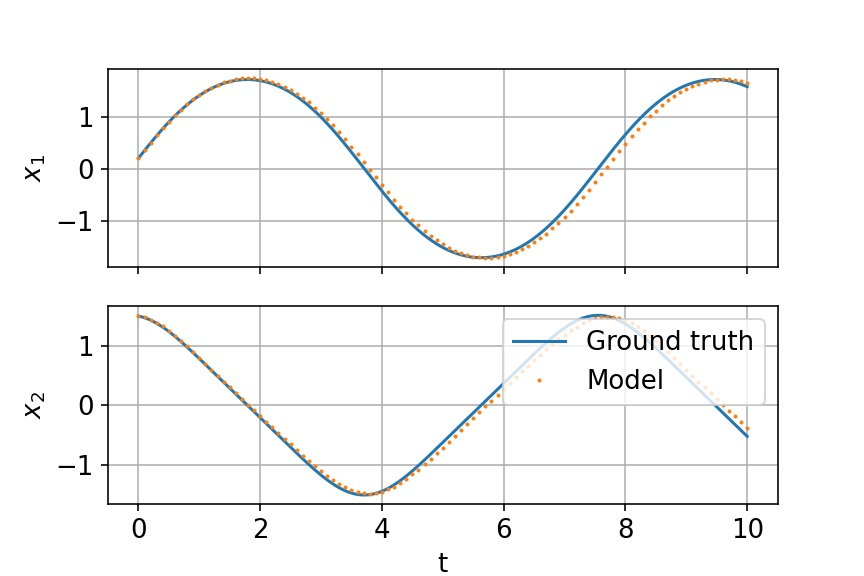}
			\caption{State prediction of the model trained with \eqref{eq. uk random}.}
			\label{subfig. pendulum pred random}
		\end{subfigure}
		\begin{subfigure}{0.45\linewidth}
			\centering
			\includegraphics[width=0.95\linewidth]{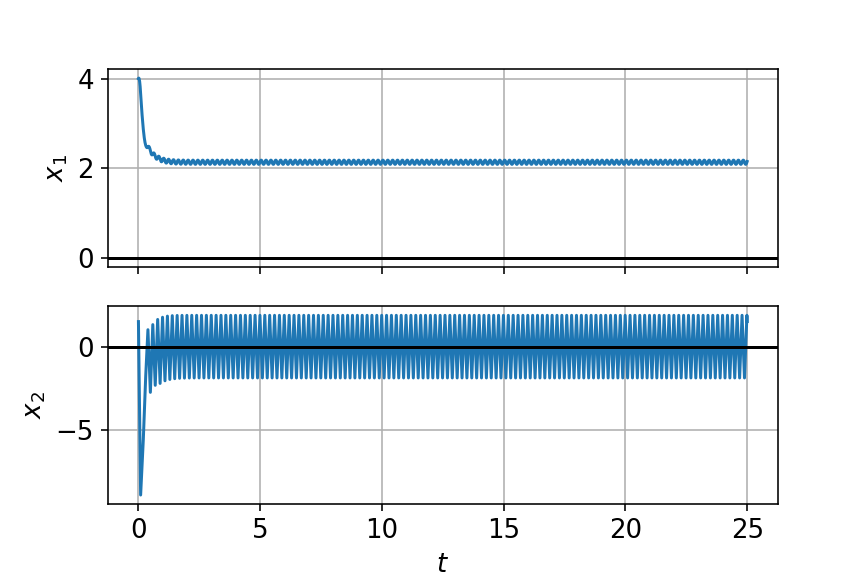}
			\caption{Controller performance of the model trained with \eqref{eq. uk random}.}
			\label{subfig. pendulum cl random}
		\end{subfigure}
		\begin{subfigure}{0.45\linewidth}
			\centering
			\includegraphics[width=0.95\linewidth]{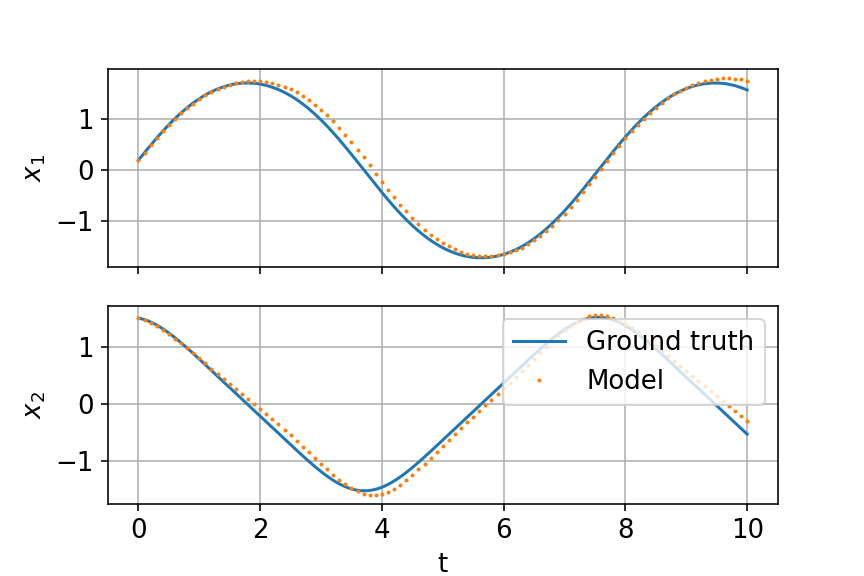}
			\caption{State prediction of the model trained with \eqref{eq. uk cos}.}
			\label{subfig. pendulum pred cos}
		\end{subfigure}
		\begin{subfigure}{0.45\linewidth}
			\centering
			\includegraphics[width=0.95\linewidth]{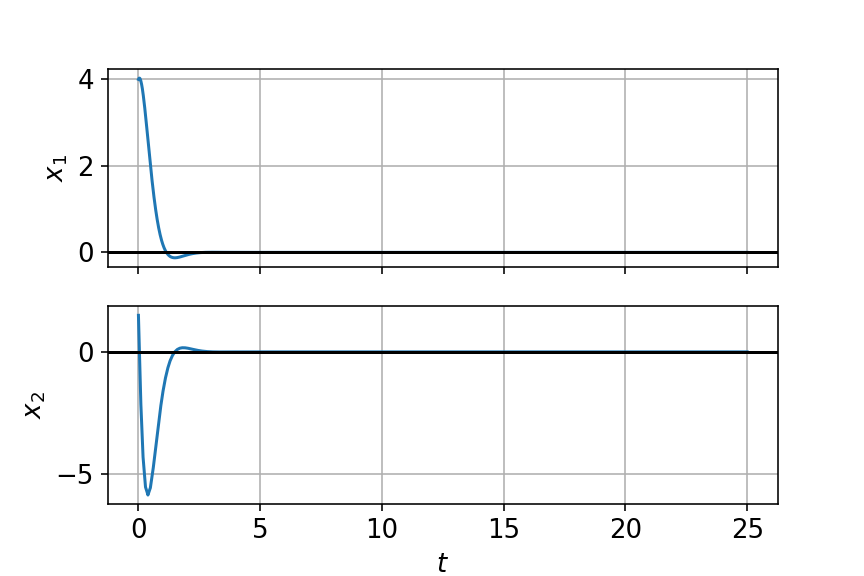}
			\caption{Controller performance of the model trained with \eqref{eq. uk cos}.}
			\label{subfig. pendulum cl cos}
		\end{subfigure}
		\caption{\em Results of the simple pendulum.}
		\label{fig. pendulum}
		\vspace{-0mm}
	\end{figure}
	
	The simple pendulum is considered as the first example:
	\vspace{-0mm}
	\begin{align}
		\ddot{\theta} = -\sin\theta + u,\ \ (x_1:=\theta, x_2:=\dot{\theta}).
		\label{eq. pendulum}
	\end{align}

	We collect 300 data sets generated by \eqref{eq. pendulum}, each of which consists of a single trajectory of a length of 50 steps with the sampling period $\Delta t=0.1$ starting from an initial condition $x_0\sim \text{Uniform}[-3,3]^2$.
	The Runge-Kutta method is used to solve \eqref{eq. pendulum} with a step size of 0.01.
	We include one nonlinear feature $\tilde{g}(x_k)\in \mathbb{R}$ ($N=1$) in the model, which is a neural network with a single hidden layer consisting of 10 neurons and the swish function is used as the activation.
	Step \ref{problem. initial training} is implemented in TensorFlow.

	The following two types of $u_k$ are considered to evaluate the efficacy of the sampling strategy in Section \ref{section. using restricted inputs}:
	\vspace{-0mm}
	\begin{align}
		u_k&\sim \text{Uniform}[-1,1],&
		\label{eq. uk random}
		\\[-0.5ex]
		u_k&=
		\cos (\omega_i k\Delta t),
		\ 
		\omega_i:= 20i,
		\
		i=0,1,\cdots,5.
		&
		\label{eq. uk cos}
	\end{align}

	In the proposed method that adopts \eqref{eq. uk cos}, each single trajectory data set is split evenly into six groups $\mathcal{D}_i$ and $u_k$ with $\omega_i$ is included in $\mathcal{D}_i$ ($i=0,1,\cdots,5$).

	Figure \ref{fig. pendulum} shows the results of the models obtained by Step \ref{problem. initial training}, where the state predictions (Figs. \ref{subfig. pendulum pred random} and \ref{subfig. pendulum pred cos}) are implemented according to \eqref{eq. state prediction} and the control simulations (Figs. \ref{subfig. pendulum cl random} and \ref{subfig. pendulum cl cos}) use LQR gains computed with the cost $\sum_{k=0}^{\infty} 100x_{k,1}^2+x_{k,2}^2+u_k^2$.
		Note that if the discretized dynamics $F$ that is considered as equivalent to the original dynamics \eqref{eq. pendulum} can be obtained by the forward Euler discretization, $F$ is linear w.r.t. input and high predictive accuracy is expected by Corollary \ref{col. state prediction} with the use of sufficiently rich feature maps.
	Whereas the state-prediction accuracy barely changes with different types of $u_k$, the controller performance is greatly deteriorated if the model uses the randomly generated input \eqref{eq. uk random}. The controller designed for the model trained with the proposed sampling strategy successfully makes the state converge to the origin.
	Since the control objective is already achieved by the initial model, Step \ref{prob. second training} is not applied in this example.

	\begin{figure}
		\centering
		\begin{subfigure}{0.45\linewidth}
			\centering
			\includegraphics[width=0.95\linewidth]{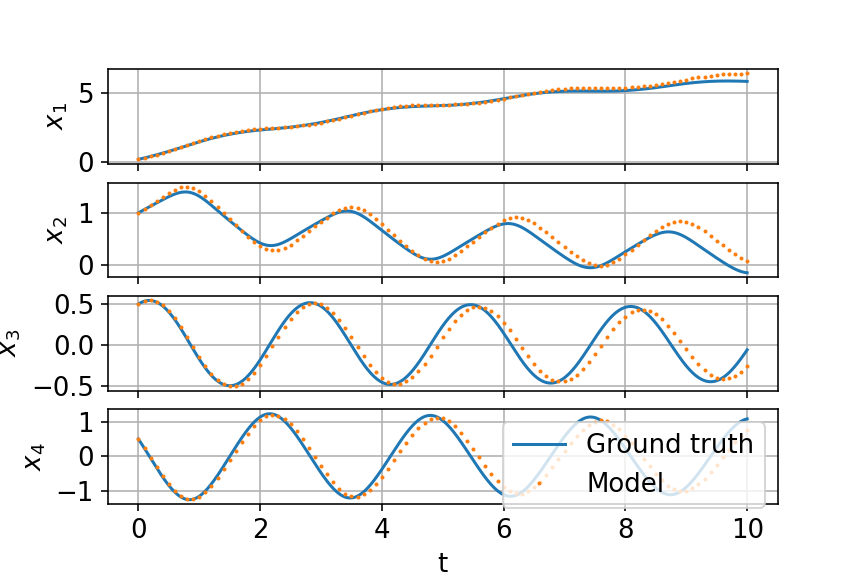}
			\caption{State prediction of the initial model.}
			\label{subfig. invp pred initial}
		\end{subfigure}
		\begin{subfigure}{0.45\linewidth}
			\centering
			\includegraphics[width=0.95\linewidth]{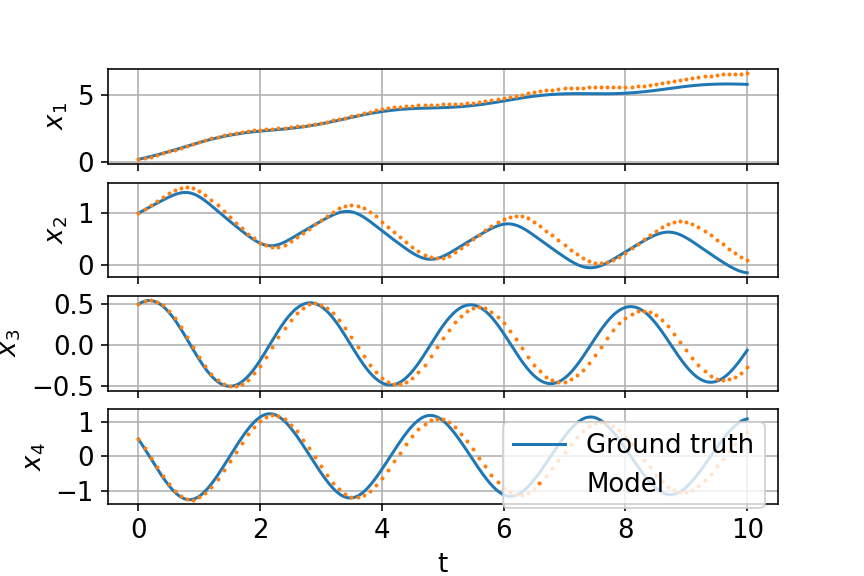}
			\caption{State prediction of the modified model.}
			\label{subfig. invp pred modified}
		\end{subfigure}
		\begin{subfigure}{0.45\linewidth}
			\centering
			\includegraphics[width=0.95\linewidth]{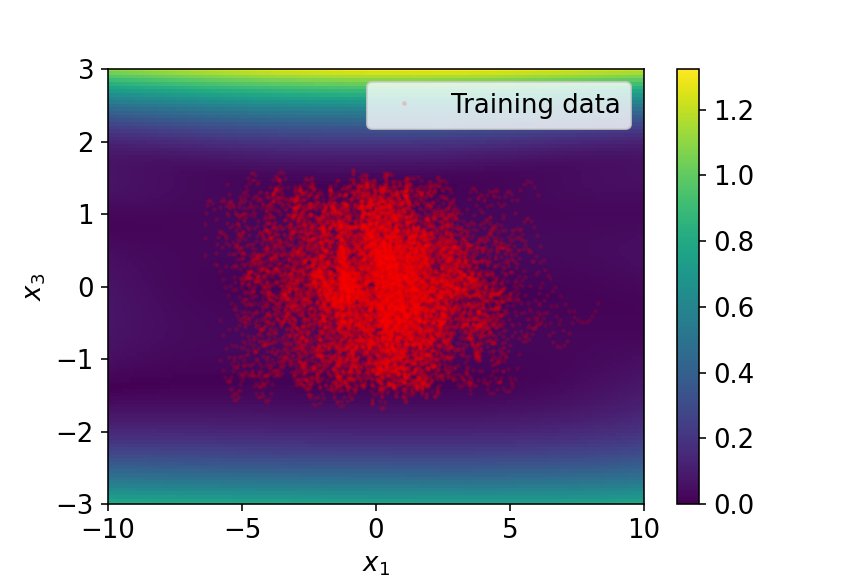}
			\caption{State prediction error of the initial model.}
			\label{subfig. invp error contour initial}
		\end{subfigure}
		\begin{subfigure}{0.45\linewidth}
			\centering
			\includegraphics[width=0.95\linewidth]{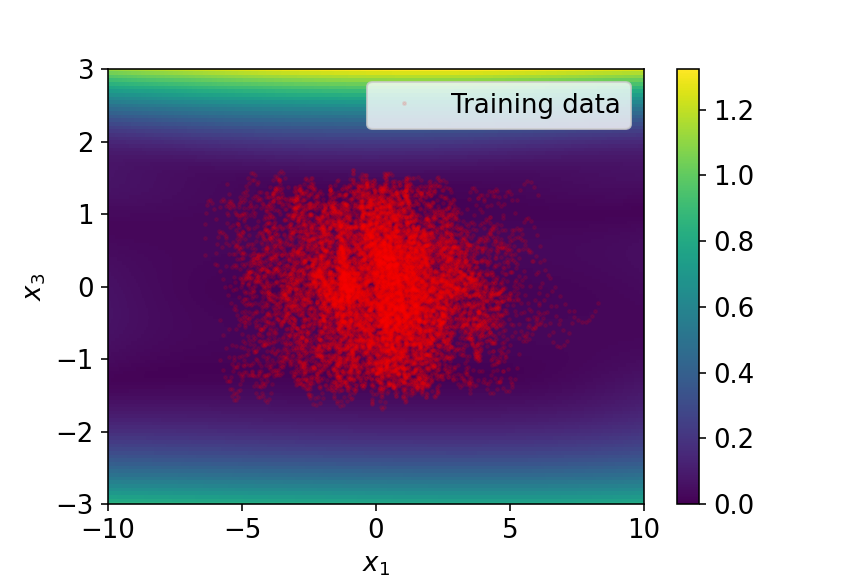}
			\caption{State prediction error of the modified model.}
			\label{subfig. invp error contour modified}
		\end{subfigure}
		\begin{subfigure}{0.45\linewidth}
			\centering
			\includegraphics[width=0.95\linewidth]{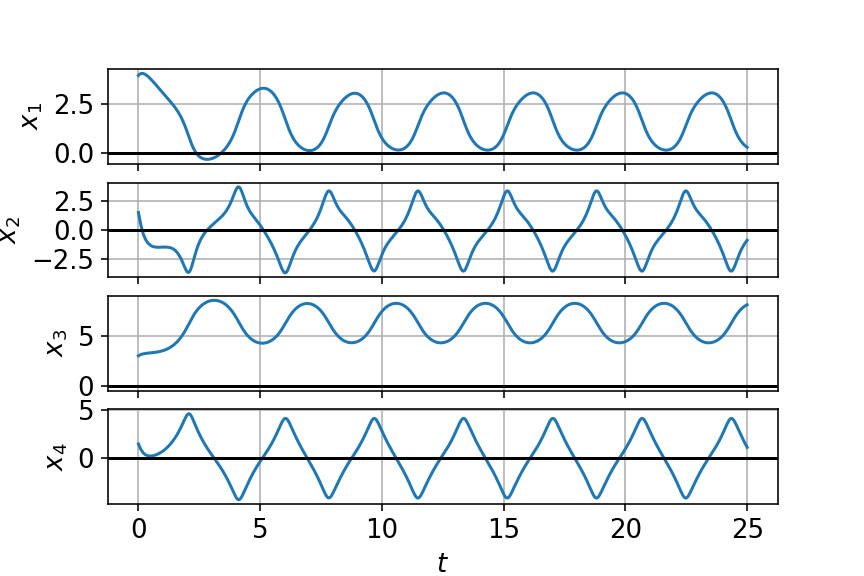}
			\caption{Controller performance of the initial model.}
			\label{subfig. invp control initial}
		\end{subfigure}
		\begin{subfigure}{0.45\linewidth}
			\centering
			\includegraphics[width=0.95\linewidth]{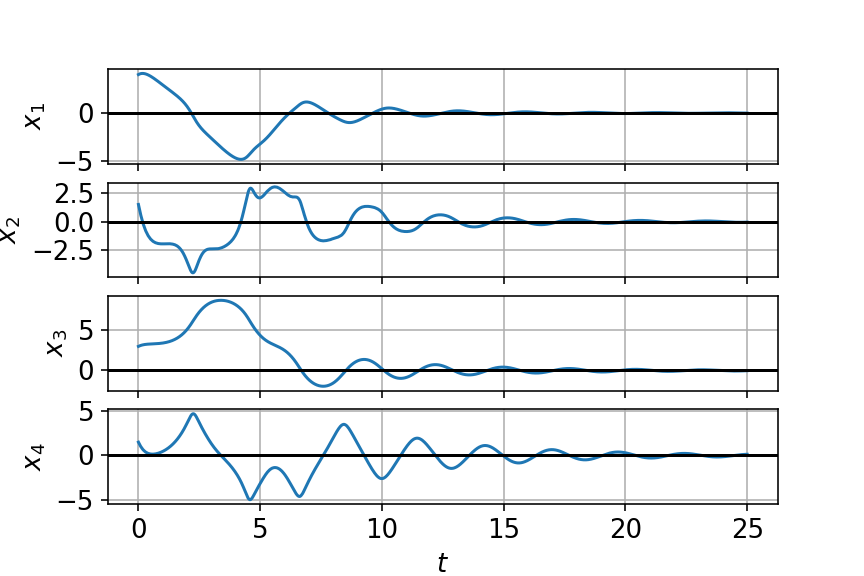}
			\caption{Controller performance of the modified model.}
			\label{subfig. invp control modified}
		\end{subfigure}
		\begin{subfigure}{0.45\linewidth}
			\centering
			\includegraphics[width=0.95\linewidth]{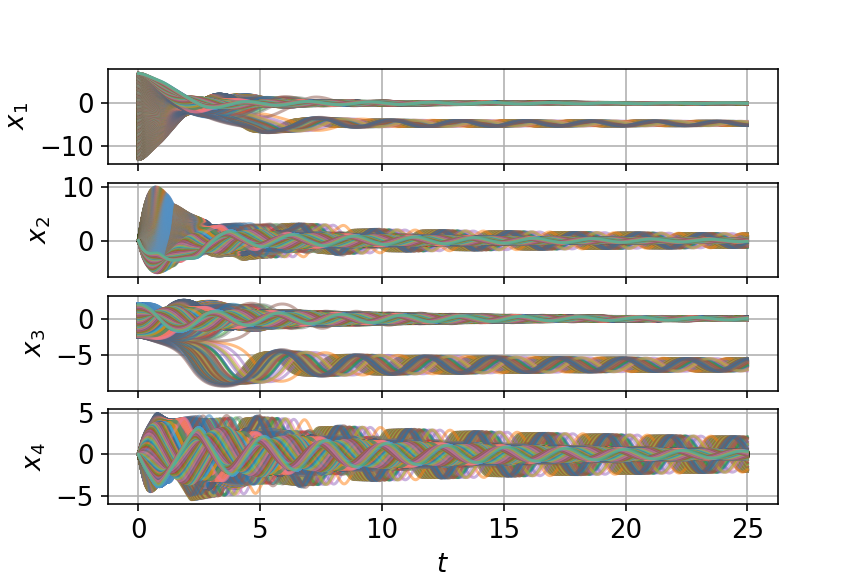}
			\caption{Estimate of basin of attraction of the initial model.}
			\label{subfig. invp basin of attraction initial}
		\end{subfigure}
		\begin{subfigure}{0.45\linewidth}
			\centering
			\includegraphics[width=0.95\linewidth]{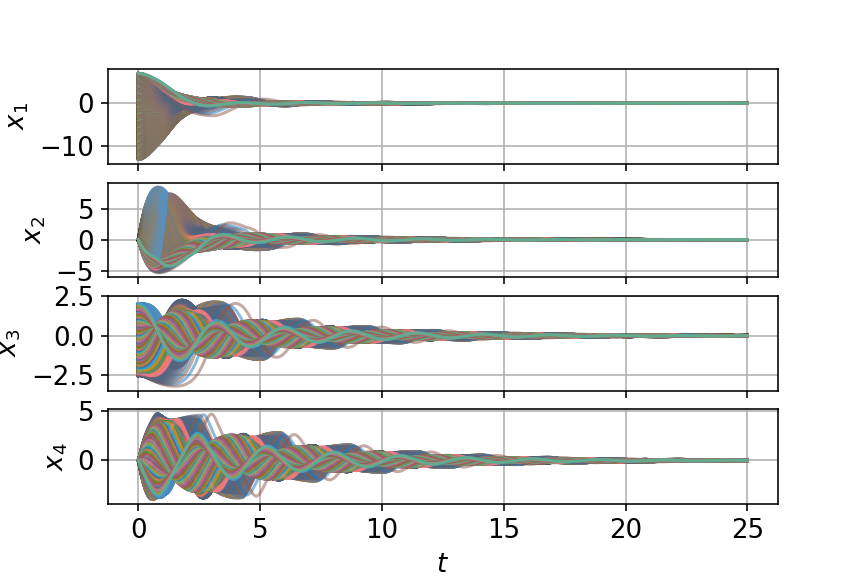}
			\caption{Estimate of basin of attraction of the modified model.}
			\label{subfig. invp basin of attraction modified}
		\end{subfigure}
		\caption{\em Results of the inverted pendulum on a cart. (c),(d): $x_2$ and $x_4$ are fixed to 0. (g),(h): Tested ranges are $x_1(0)\in [-13,6.9]$, $x_2(0)\in [-2.5,2]$ with $x_2(0)=x_4(0)=0$.}
		\label{fig. invp}
		\vspace{-0mm}
	\end{figure}
	
	\vspace{-0cm}
	\subsection{Inverted Pendulum on A Cart}
	\vspace{-0mm}
	The inverted pendulum on a cart is considered as the second example, whose dynamics is given as follows\cite{Data_driven_book}:
	\vspace{-0mm}
	\begin{align}
		\left\{
		\hspace{-2mm}
		\begin{array}{l}
			\dot{x}_1=x_2,
			\\
			\dot{x}_2=
			\cfrac{
				-\hspace{-1mm}m^2L^2g\cos x_3\sin x_3 
				\hspace{-1mm}+\hspace{-1mm} 
				mL^2A(x_2,x_3,x_4) 
				\hspace{-1mm}+\hspace{-1mm}
				mL^2u
			}
			{mL^2(M+m(1-\cos^2 x_3))},
			\\
			\dot{x}_3=x_4,
			\\
			\dot{x}_4=\hspace{-1mm}
			\cfrac{
				\begin{array}{l}
					(m+M)mgL\sin x_3 -mL\cos x_3 A(x_2,x_3,x_4)
					\\
					\hspace{47mm}+mL\cos x_3 u
				\end{array}
			}
			{mL^2(M+m(1-\cos^2 x_3))},
		\end{array}
		\right.
		\label{eq. inverted pendulum on a cart}
	\end{align}
	where 
	$A(x_2,x_3,x_4)=mL{x_4}^2\sin x_3 -\delta x_2$, $m=1$, $M=5$, $L=2$, $g=-10$, and $\delta=1$.
	Step \ref{problem. initial training} is applied with the same conditions as the first example except for the number of hidden layers, which is changed to 25.
	Also, only the deterministic sampling \eqref{eq. uk cos} for $u_k$ is considered in this example.
	For the controller design, the cost is defined as $\sum_{k=0}^{\infty} 100x_{k,1}^2+x_{k,2}^2+100x_{k,3}^2+x_{k,4}^2+u_k^2$.
		The governing equation \eqref{eq. inverted pendulum on a cart} can yield its equivalent discretized dynamics as the control-affine form with the forward Euler and Corollary \ref{col. state prediction} is applicable to the state-prediction with $u_k\equiv 0$.

	While the initially learned model has reasonable predictive accuracy as in Fig. \ref{subfig. invp pred initial}, the controller performance suffers from the undesirable modeling error effect, which is shown in Fig. \ref{subfig. invp control initial}.
	Thus, Step \ref{prob. second training} is implemented to modify the model, for which we additionally collect data points in the same way as Step \ref{problem. initial training}. The TensorFlow Constrained Optimization module\cite{tfco_paper} is used to solve the optimization problem in Step \ref{prob. second training} with $\epsilon_A=\epsilon_B=0.1$.
		The LQR gain is also recomputed with the updated model parameters $(A+\Delta A, B + \Delta B)$.

	It is shown in Fig. \ref{subfig. invp control modified} that the modified model achieves the control objective.
	As a more quantitative analysis, we estimate the basin of attraction of the closed-loop systems by testing various initial conditions, whose results are shown in Figs. \ref{subfig. invp basin of attraction initial} and \ref{subfig. invp basin of attraction modified}.
	Some initial conditions do not converge to the origin for the closed-loop system formed by the initial model, while it is shown that the given set of initial conditions is a basin of attraction for the one formed by the modified model.
	Moreover, the modified model retains good state-prediction accuracy thanks to the constraints on $\Delta A$ and $\Delta B$ and it is comparable to that of the initial model (Figs. \ref{subfig. invp pred initial} and \ref{subfig. invp pred modified}).
	Finally, the profiles of the state prediction errors are evaluated.
	Figures \ref{subfig. invp error contour initial} and \ref{subfig. invp error contour modified} show the heat maps of $\| [I\ 0]r(x,0) \|_2$ with $x_2=x_4=0$, which are overlaid with the data points used in Step \ref{problem. initial training}. 
	It is confirmed that both models have quite similar error profiles and the good state-prediction accuracy of the initial model is successfully preserved after the modification.

	\vspace{-0cm}
	\section{Conclusion}
	\vspace{-0.2cm}
	A  control-aware learning method is proposed along with a simple but effective data sampling strategy for  Koopman operator-based control. The initial learning of the 
	observables and the operator matrices is augmented by a second step to improve the controller performance.
	The use of deterministically-sampled input data from continuous functions improves the controller performance and the proposed two-stage learning contributes to improved closed-loop behavior by updating the operator matrices while retaining high state-prediction accuracy obtained by the initially learned model.

        \section{Acknowledgement}
        This work was funded by APRA-E under the project \textit{SAFARI: Secure Automation for Advanced Reactor Innovation}.
	
	\appendix
	\subsection{Proof of Proposition \ref{prop. invariant subspace and existence of finite dimensional K}}
	\label{appendix proof of prop 1}
		Suppose $\text{span}(g_1\cdots, g_D)$ is an invariant subspace under the action of the Koopman operator $\mathcal{K}$, which implies
		\begin{align}
			\mathcal{K}g 
			=
			\sum_{j=1}^{D} a_j (\mathcal{K}g_j),
		\end{align}
		is in $\text{span}(g_1\cdots, g_D)$ for $\forall g\in \text{span}(g_1\cdots, g_D)$, where $g=\sum_{j=1}^{D}a_jg_j$ for some $a_j\in \mathbb{R}$.
		By taking $g=g_i$, $i=1,\cdots,D$, we have
		\begin{align}
			&
			\exists k_{i1},\cdots,k_{iD}\in \mathbb{R}
			\text{ s.t. }
			\mathcal{K}g_i
			=
			\sum_{j=1}^{D} k_{ij} g_j,
			\text{ for }i=1,\cdots,D,
			&\nonumber
			\\
			&\Leftrightarrow\ \ 
			\exists K\in \mathbb{R}^{D\times D}
			\text{ s.t. }
			\left[
			\begin{array}{c}
				\mathcal{K}g_1
				\\
				\vdots
				\\
				\mathcal{K}g_D
			\end{array}
			\right]
			=
			K
			\left[
			\begin{array}{c}
				g_1
				\\
				\vdots
				\\
				g_D
			\end{array}
			\right].
			&
		\end{align}

		Conversely, if there exists $K\in \mathbb{R}^{D\times D}$ satisfying \eqref{eq. finite dimensional approx of K}, for $\forall g=\sum_{i=1}^{D}a_i g_i\in \text{span}(g_1,\cdots,g_D)$,
		\begin{align}
			\mathcal{K}g
			=&
			\sum_{i=1}^{D} a_i (\mathcal{K}g_i)
			&\nonumber
			\\
			=&
			\sum_{i=1}^{D} a_i 
			\sum_{j=1}^{D} k_{ij} g_j
			\ 
			(\text{for some }k_{ij}\in \mathbb{R})
			&\nonumber
			\\
			=&
			\sum_{j=1}^{D}
			\left(
			\sum_{i=1}^{D} a_i k_{ij}
			\right)
			g_j
			&\nonumber
			\\
			\in&
			\text{span}(g_1,\cdots,g_D).
			&
		\end{align}
		\hspace{0.9\linewidth}$\blacksquare$

	\subsection{Proof of Proposition \ref{prop. an error bound}}
	\label{appendix proof of prop 3 an error bound}
		Since $\mu_x$ and $\mu_u$ have compact supports and $g$ is both measurable and continuous, $g\in L_p(\mathcal{X}, \mathcal{A}_x, \mu_x)$ for all $1\leq p < \infty$.
		Specifically, we have $g\in L_1(\mathcal{X}, \mathcal{A}_x, \mu_x)$ and $g\in L_2(\mathcal{X}, \mathcal{A}_x, \mu_x)$.
		The function $h$ is well-defined as $h\in L_2$. Indeed,
		\begin{align}
			&\int_{\mathcal{X}\times \mathcal{U}} 
			\left\|
			\left[
			\begin{array}{c}
				g(x)
				\\
				u
			\end{array}
			\right]
			\right\|_2^2
			\mu(dxdu) 
			&\nonumber
			\\
			&=\hspace{-1mm}
			\underset{=:a}{\underbrace{
					\int_{\mathcal{X}}
					\hspace{-1mm}
					\left\|
					g(x)	
					\right\|_2^2
					\mu_x(dx)
			}}
			\underset{=:b}{\underbrace{
					\int_{\mathcal{U}}
					\hspace{-1mm}
					\mu_u(du)
			}}
			\hspace{-0.5mm}+\hspace{-1.5mm}
			\underset{=:c}{\underbrace{
					\int_{\mathcal{X}}
					\mu_x(dx)
			}}
			\underset{=:d}{\underbrace{
					\int_{\mathcal{U}}
					\left\|
					u	
					\right\|_2^2
					\mu_u(du)
			}}
			&\nonumber
			\\
			&<
			\infty,
			&
		\end{align}
		where the last inequality holds since $a=\| g \|_{L_2}^2<\infty$ and $b$, $c$, and $d$ are also finite since the supports of $\mu_x$ and $\mu_u$ are bounded.
		Using the triangle inequality, we have
		\begin{align}
			\| r(x,u) \|_2
			&=
			\left\| g(F(x,u)) - [A\ B]
			\left[
			\begin{array}{c}
				g(x)
				\\
				u
			\end{array}
			\right]
			\right\|_2
			&\nonumber
			\\
			&\leq 
			\| g(F(x,u)) \|_2
			+
			\| [A\ B] \|
			\left\| 
			\left[
			\begin{array}{c}
				g(x)
				\\
				u
			\end{array}
			\right]
			\right\|_2,
			&\nonumber
			\\
			\Leftrightarrow
			\ \ 
			\| r(x,u) \|_2^2
			&\leq 
			\| g(F(x,u)) \|_2^2
			&\nonumber
			\\
			&
			\hspace{3mm}
			+
			\| [A\ B] \|
			\left\{
			\| [A\ B] \|
			\left\| 
			\left[
			\begin{array}{c}
				g(x)
				\\
				u
			\end{array}
			\right]
			\right\|_2^2
			\right. 
			&\nonumber
			\\
			&
			\hspace{3mm}
			\left. 
			+2
			\| g(F(x,u)) \|_2
			\left\| 
			\left[
			\begin{array}{c}
				g(x)
				\\
				u
			\end{array}
			\right]
			\right\|_2
			\right\}.
			\label{eq. triangle inequality}
		\end{align}

		Define 
		\begin{align}
			\hat{h}:\mathcal{X}\times \mathcal{U}\rightarrow \mathbb{R}^{n+p}
			:(x,u)\mapsto 
			[A\ B]
			\left[
			\begin{array}{c}
				g(x)
				\\
				u
			\end{array}
			\right].
		\end{align}

		It is confirmed that $\hat{h}\in L_2$ as 
		\begin{align}
			&\int_{\mathcal{X}\times \mathcal{U}} 
			\left\|
			[A\ B]
			\left[
			\begin{array}{c}
				g(x)
				\\
				u
			\end{array}
			\right]
			\right\|_2^2
			\mu(dxdu)
			&\nonumber
			\\
			=&
			\int_{\mathcal{X}\times \mathcal{U}} 
			\sum_{i=1}^N
			\left(
			\sum_{l=1}^{N} a_{i,l} g_l(x)
			+
			\sum_{l=1}^{p} b_{i,l} u_l
			\right)^2
			\mu(dxdu)
			&\nonumber
			\\
			=&
			\sum_{i=1}^{N}
			\left\{
			\sum_{l=1}^{N} 
			a_{i,l}^2 
			\| g_l \|_{L_2}^2
			\int_{\mathcal{U}} \mu_u(du)
			\right. 
			&\nonumber
			\\
			&\hspace{9mm}\left. 
			+
			2\sum_{j=2}^{N} \sum_{l=1}^{j-1}
			a_{i,l} a_{i,j}
			\langle g_l, g_j \rangle_{L_2}
			\int_{\mathcal{U}} 
			\mu_u(du)
			\right. 
			&\nonumber
			\\
			&\hspace{9mm}\left.
			+
			2 \sum_{l=1}^{N} \sum_{j=1}^{p} 
			a_{i,l} b_{i,l} 
			\int_{\mathcal{X}} g_l(x) \mu_x(dx)
			\int_{\mathcal{U}} u_j \mu_u(du)
			\right.
			&\nonumber
			\\
			&
			\hspace{9mm}
			\left. 
			+
			\sum_{l=1}^{p} 
			b_{i,l}^2
			\int_{\mathcal{X}\times \mathcal{U}} 
			u_l^2
			\mu(dxdu)
			\right. 
			&\nonumber
			\\
			&\hspace{9mm}\left.
			+
			2\sum_{j=2}^{p} \sum_{l=1}^{j-1}
			b_{i,l} b_{i,j}
			\int_{\mathcal{X}\times \mathcal{U}} 
			u_l u_j
			\mu(dxdu)
			\right\}
			& 
			\label{eq. proof of prop 3 any integration}
			\\
			<&\infty,
		\end{align}
		where any integration in \eqref{eq. proof of prop 3 any integration} is finite since the supports of $\mu_x$ and $\mu_u$ are bounded and $g\in L_1$, i.e., $g$ is integrable.
		Noticing that $r$ is a linear combination of $g\circ F\in L_2$ and 
		$\hat{h}\in L_2$, we have $r\in L_2$ by the same argument as above.
		Also, $l_2$ norms:
		\begin{align}
			N_1(\cdot):=\| (g\circ F)(\cdot) \|_2
			&:\mathbb{R}^{n+p}\rightarrow \mathbb{R}_{\geq 0}
			&\nonumber
			\\
			&:(x,u)\mapsto \| (g\circ F)(x,u) \|_2,
			&
			\\
			&\hspace{-3.25cm}
			N_2(\cdot):=\| h(\cdot) \|_2:\mathbb{R}^{n+p}\rightarrow \mathbb{R}_{\geq 0}:(x,u)\mapsto \| h(x,u) \|_2,
			&
		\end{align}
		belong to $L_2$ as
		\begin{align}
			\int_{\mathcal{X}\times \mathcal{U}} | N_1(x,u) |^2 \mu(dxdu)
			=&
			\int_{\mathcal{X}\times \mathcal{U}} \| (g\circ F)(x,u) \|_2^2 \mu(dxdu)
			&\nonumber
			\\
			=&
			\| g\circ F \|_{L_2}^2
			&\nonumber
			\\
			<&\infty,&
			\\
			\int_{\mathcal{X}\times \mathcal{U}} | N_2(x,u) |^2 \mu(dxdu)
			=&
			\int_{\mathcal{X}\times \mathcal{U}} \| h(x,u) \|_2^2 \mu(dxdu)
			&\nonumber
			\\
			=&
			\| h \|_{L_2}^2
			&\nonumber
			\\
			<&\infty,&
		\end{align}
		so that the following quantity is well-defined as the inner product of $L_2$:
		\begin{align}
			&\int_{\mathcal{X}\times \mathcal{U}} 
			\| g(F(x,u)) \|_2
			\left\| 
			\left[
			\begin{array}{c}
				g(x)
				\\
				u
			\end{array}
			\right]
			\right\|_2
			dxdu
			&\nonumber
			\\
			=&
			\left\langle 
			\| (g\circ F)(\cdot) \|_2, \| h(\cdot) \|_2
			\right\rangle_{L_2}.
			&
		\end{align}

		Therefore, \eqref{eq. triangle inequality} implies the following: 
		\begin{align} 
			\| r \|_{L_2}^2
			&=
			\int_{\mathcal{X}\times \mathcal{U}} \|r(x,u)\|_2^2 dxdu
			&\nonumber
			\\
			&\leq 
			\int_{\mathcal{X}\times \mathcal{U}} \| g(F(x,u)) \|_2^2 dxdu
			&\nonumber
			\\
			&
			\hspace{5mm}
			+
			\| [A\ B] \|
			\left\{
			\| [A\ B] \|
			\int_{\mathcal{X}\times \mathcal{U}} 
			\left\| 
			\left[
			\begin{array}{c}
				g(x)
				\\
				u
			\end{array}
			\right]
			\right\|_2^2
			dxdu
			\right. 
			&\nonumber
			\\
			&\hspace{5mm}
			\left. 
			+
			2
			\int_{\mathcal{X}\times \mathcal{U}} 
			\| g(F(x,u)) \|_2
			\left\| 
			\left[
			\begin{array}{c}
				g(x)
				\\
				u
			\end{array}
			\right]
			\right\|_2
			dxdu
			\right\}&\nonumber
			\\
			&=
			\| g\circ F \|_{L_2}^2
			+
			\| [A\ B] \|
			\left\{
			\| [A\ B] \|
			\| h \|_{L_2}^2
			\right. 
			&\nonumber
			\\
			&\hspace{2cm}
			\left. 
			+
			2
			\left\langle 
			\| (g\circ F)(\cdot) \|_2, \| h(\cdot) \|_2
			\right\rangle_{L_2} 
			\right\}.
			&\nonumber
		\end{align}
		\hspace{0.9\linewidth}$\blacksquare$
	
	\bibliography{acc2022}  
	\bibliographystyle{ieeetr}

\end{document}